\newcommand{\newsection}[1]{\setcounter{equation}{0} \section{#1}}
\newcommand{\bea}{\begin{eqnarray}}
\newcommand{\eea}{\end{eqnarray}}
\newcommand{\clb}{\mathcal{B}}
\newcommand{\cld}{\mathcal{D}}
\newcommand{\cle}{\mathcal{E}}
\newcommand{\clf}{\mathcal{F}}
\newcommand{\clh}{\mathcal{H}}
\newcommand{\clk}{\mathcal{K}}
\newcommand{\cll}{\mathcal{L}}
\newcommand{\clm}{\mathcal{M}}
\newcommand{\clq}{\mathcal{Q}}
\newcommand{\cls}{\mathcal{S}}
\newcommand{\clw}{\mathcal{W}}
\newcommand{\z}{\bm{z}}
\newcommand{\w}{\bm{w}}
\newcommand{\D}{\mathbb{D}}
\newcommand{\C}{\mathbb{C}}
\newcommand{\raro}{\rightarrow}
\def\textmatrix#1&#2\\#3&#4\\{\bigl({#1 \atop #3}\ {#2 \atop #4}\bigr)}
\def\dispmatrix#1&#2\\#3&#4\\{\left({#1 \atop #3}\ {#2 \atop #4}\right)}
\newcommand{\be}{\begin{equation}}
\newcommand{\ee}{\end{equation}}
\newcommand{\ben}{\begin{eqnarray*}}
	\newcommand{\een}{\end{eqnarray*}}
\newcommand{\NI}{\noindent}
\newcommand{\bi}{\begin{itemize}}
	\newcommand{\ei}{\end{itemize}}
\newtheorem{Theorem}{\sc Theorem}[section]
\newtheorem{Corollary}[Theorem]{\sc Corollary}
\theoremstyle{definition}
\newtheorem{Remark}[Theorem]{\sc Remark}
\theoremstyle{plain}
\newtheorem{thm}{Theorem}[section]
\newtheorem{lem}[thm]{Lemma}
\theoremstyle{definition}
\numberwithin{equation}{section}
\let\phi=\varphi
\begin{document}

	\title[Doubly commuting mixed invariant subspaces in the polydisc]{Doubly commuting mixed invariant subspaces in the polydisc}

   \author[Maji]{Amit Maji}
	\address{Indian Institute of Technology Roorkee, Department of Mathematics,
		Roorkee-247 667, Uttarakhand,  India}
	\email{amit.maji@ma.iitr.ac.in, amit.iitm07@gmail.com}
	
%	\author[Sarkar]{Jaydeb Sarkar}
%\address{Indian Statistical Institute, Statistics and Mathematics Unit, 8th Mile, Mysore Road, Bangalore, 560059, India}
%\email{jay@isibang.ac.in, jaydeb@gmail.com}

	\author[T R]{Sankar T R}
	\address{Indian Statistical Institute, Statistics and Mathematics Unit, 8th Mile, Mysore Road, Bangalore, 560059, India}
	\email{sankartr90@gmail.com}
	
% \today
	
	\subjclass[2010]{32A10, 32A35, 47A13, 47A15, 47A20, 47A45, 47A80, 46E20, 47B35, 30H05, 30H10}

	\keywords{Mixed invariant subspace, bounded analytic functions, doubly commuting, multipliers, Jordan block, Hardy space}

\maketitle

\begin{abstract}
We obtain a complete characterization for {\it{doubly commuting mixed 
invariant}} subspaces of the Hardy space over the unit polydisc.
We say a closed subspace $\clq$ of $H^2(\D^n)$ is
{\it{mixed invariant}} if
$M_{z_{j}}(\clq) \subseteq \clq$ for 
$1 \leq j \leq k$ and $M_{z_{j}}^*(\clq) \subseteq \clq$, $k+1 \leq j \leq n$
for some integer $k \in \{1, 2, \ldots, n-1 \}$.
We prove that a {\it{mixed invariant}} subspace $\clq$ of $H^2(\D^n)$ 
is doubly commuting if and only if
\[
\clq = \Theta H^2(\D^k) \otimes \clq_{\theta_1} \otimes \cdots \otimes \clq_{\theta_{n-k}},
\]
where $\Theta \in H^{\infty}(\D^k)$ is some inner function and $\clq_{\theta_j}$ is either a Jordan block $H^2(\D)\ominus \theta_j H^2(\D)$
for some inner function $\theta_j$ or the Hardy space $H^2(\D)$. 
Furthermore, an explicit representation for the commutant of
an $n$-tuple of doubly commuting shifts as well as a 
representation for the commutant of a doubly commuting tuple of shifts and co-shifts are obtained. Finally, we discuss some concrete examples of {\it{mixed invariant}} subspaces.  
\end{abstract}

%\newsection{Prerequisites}

\newsection{Introduction}

The celebrated Beurling theorem (see \cite{B}) gives the structure of
shift invariant subspaces in the Hardy space $H^2(\D)$ over the unit disc: 
A non-zero closed subspace $\cls$ of
$H^2(\D)$ is invariant for $M_z$ if and only if there exists an
inner function $\theta \in H^\infty(\D)$ such that
\[
\cls = \theta H^2(\D).
\]
In particular, we can represent $\cls$ as
\[
\cls = \mathop{\oplus}_{m=0}^\infty z^m (\cls \ominus z \cls).
\]
A proper closed subspace $\clq$ of $H^2(\D)$ is said to be a Jordan block of 
$H^2(\D)$ if $\clq$ is $M_z^*$-invariant. Now $\clq$ is $M_z^*$-invariant if and only if $\clq^{\perp}$ is $M_z$-invariant. Thus by Beurling's theorem, there exists
an inner function $\theta \in H^{\infty}(\D)$ such that $\clq^{\perp} = \theta H^2(\D)$.
Therefore the Jordan block of $H^2(\D)$ is determined by  $\theta$ and is given by $\clq_{\theta} = H^2(\D)\ominus \theta H^2(\D)$.

Now we are going to define Jordan block of the Hardy space over the unit polydisc. 
It is well known that the $n$-tuple
of multiplication operators $(M_{z_1}, \ldots, M_{z_n})$ on the Hardy space 
$H^2(\D^n)$, $n>1$ over the unit polydisc is doubly commuting. A proper closed subspace $\clq $
of $ H^2(\D^n)$ is said to be a Jordan block of 
$H^2(\D^n)$, $n>1$ if $\clq$ is invariant under the adjoint of $(M_{z_1}, \ldots, M_{z_n})$
and the $n$-tuple $(P_{\clq}M_{z_1}|_{\clq}, \ldots, P_{\clq}M_{z_n}|_{\clq})$ is doubly commuting.
Firstly, Douglas and Yang (see \cite{DY-Quotient}, \cite{DY-Operator}) 
started the study of Jordan block 
of the Hardy space $H^2(\D^2)$ over the bidisc. 
After that, Izuchi, Nakazi and Seto \cite{INS}   
classified the Jordan block of $H^2(\D^2)$. Recently, J. Sarkar \cite{JAY-JORDAN}
has given a complete characterization of the Jordan block of $H^2(\D^n)$: 
A subspace $\clq $ of $H^2(\D^n)$ is a Jordan block if and only if
$\clq = \clq_{\theta_1} \otimes \cdots \otimes \clq_{\theta_n}$, where $\clq_{\theta_i}$
is a Jordan block of $H^2(\D)$.

The structure of joint invariant subspaces in the Hardy space $H^2(\D^2)$ over the bidisc
and in general the Hardy space over the polydisc $H^2(\D^n)$, $n > 1$ is very complex. That is because of the subtleties of theory of holomorphic functions in several variables as well as the difficulty associated with the structure of
$n$-tuples, $n >1$, of commuting isometries on Hilbert spaces.
There are several mathematicians like Agrawal,
Clark, and Douglas \cite{ACD}, Ahern and Clark \cite{AC}, Guo
\cite{KG1, KG}, Guo, Sun, Zheng and Zhong \cite{GZ},
Rudin \cite{Ru}, Izuchi \cite{I}, Mandrekar
\cite{M} etc. (also see \cite{III-Ranks, III-Blaschke, IN, CG} and the references therein) 
have studied the structure
of joint invariant subspaces. 
The four
operators $R_{z_1}, R_{z_2}$, $C_{z_1}$, and $C_{z_2}$ play an important role in finding the structure of (joint) invariant subspace $\cls$ of $H^2(\D^2)$,
where
$R_{z_1}= M_{{z_1}}|_{\cls}$ and $R_{z_2}= M_{{z_2}}|_{\cls}$, $C_{z_1} =P_{\cls^{\perp}}M_{{z_1}}|_{\cls^{\perp}}$,
and $C_{z_2} =P_{\cls^{\perp}}M_{{z_2}}|_{\cls^{\perp}}$.
For example,
Mandrekar \cite{M} showed that if the commutator $[R_{z_1}, R_{{z_2}}^{*}]=0$, 
then $\cls= \phi H^2(\D^2)$ for some inner function $\phi \in H^{\infty}(\D^2)$. 
Again if $[C_{z_1}, C_{{z_2}}^{*}]=0$
or $\mbox{rank} [C_{z_1}, C_{{z_2}}^{*}] =1$,
then the structure of $\cls$ is known (see \cite{INS}). 
But for $\mbox{rank} [C_{z_1}, C_{{z_2}}^{*}] >1$,
the structure of $\cls$ is not known.  
Motivated by the above fact, Izuchi et. al. (see \cite{IIN}) firstly introduced
the concept of {\it{mixed invariant}} subspaces of $H^2(\D^2)$:  
A closed subspace
$\cls$ of $H^2(\D^2)$ is said to be {\it{mixed invariant}} if
$M_{z_1}(\cls) \subseteq \cls$ and $M^{*}_{z_2}(\cls) \subseteq \cls$.
Clearly, if $\cls$ is
joint $(M_{z_1}, M_{z_2}^*)$ invariant, then $\cls^{\perp} = H^2(\D^2) \ominus \cls$ is again a joint
$(M_{z_1}^*, M_{z_2})$ invariant. 
Thus a natural question arises in the setting of the Hardy space over the unit polydisc:

\vspace{.1cm}
\textsf{What is the structure of  
{\textit{doubly commuting mixed invariant}} subspaces of $H^2(\D^n)$, $n>1$}?
\vspace{.1cm}

\noindent
More precisely, if a proper closed subspace $\clq$ of $H^2(\D^n)$ for $n>1$ is invariant
under \\
$(M_{z_1}, \ldots, M_{z_k}, M^*_{z_{k+1}},\ldots, M^*_{z_n})$ and the $n$-tuple
$(P_{\clq}M_{z_1}|_{\clq}, \ldots, P_{\clq}M_{z_n}|_{\clq})$ is doubly commuting, then
what is the explicit structure of $\clq$?

In this paper we will give a complete characterization of
{\textit{doubly commuting mixed invariant}} 
subspaces of $H^2(\D^n)$, $n>1$. 
We also find an explicit representation for the commutant of an $n$-tuple of 
doubly commuting shifts.
Moreover, a representation of a large class of bounded operators 
which intertwine with some doubly commuting shifts and co-shifts
on the Hardy space over the polydisc is obtained.

This paper is organized as follows: In Section 2 we give some 
basic definitions and results on the Hardy space over the
polydisc. In Section 3, we find an explicit representation for the commutant
of an $n$-tuple of doubly commuting shifts and also a 
representation for the commutant of a doubly commuting tuple of shifts and co-shifts 
on the Hardy space over the polydisc.
In Section 4, we completely classify {\textit{doubly commuting mixed invariant}} subspaces
of the Hardy spaces over the polydisc. Finally, we discuss some applications in Section 5.

%%%%%%%%%%%%%%%%%%%%%%%%%%%%%%%%%%%%%%%%%%%%%%%%%%%%%%%%%%%%%%%%%%%%%%%%%%%
%%%%%%%%%%%%%%%%%%%%%%%%%%%%%%%%%%%%%%%%%%%%%%%%%%%%%%%%%%%%%%%%%%%%%%%%

\newsection{Preliminaries}
For $n \geq 1$, let $\D^n = \D \times \dots \times \D$ be the open unit polydisc in $\C^n$.
Throughout the paper we use the notation $\z$ for the
$n$-tuple $(z_1, \ldots, z_n)$ in $\mathbb{C}^n$. Also for any
multi-index $\bm{k} = (k_1, \ldots, k_n) \in \mathbb{Z}_+^n$ and $\z
\in \mathbb{C}^n$, we write $\z^{\bm{k}} = z_1^{k_1} \cdots
z_n^{k_n}$. Let $\clh$ and $\clk$ be two separable complex Hilbert spaces. 
The set of all bounded linear operators from $\clh$ to $\clk$ is denoted 
by $\clb(\clh, \clk)$. If $\clh = \clk$, then we shall denote it by $\clb(\clh)$
instead of $\clb(\clh, \clh)$.
A contraction $T$ on $\clh$ (that is, $\|Th \| \leq \|h \|$ for all $h \in \clh$)
is said to be a pure contraction if $T^{*m} \rightarrow 0$ as $m \rightarrow \infty$
in the strong operator topology. Also 
an isometry $V$ on $\clh$ (that is, $V^{*}V =I_{\clh}$ )
is said to be pure or shift if $V^{*m} \rightarrow 0$ as $m \rightarrow \infty$
in the strong operator topology (see \cite{H}, \cite{NF}).

The \textit{Hardy space}
$H^2(\mathbb{D}^n)$ over the polydisc $\mathbb{D}^n$ is the Hilbert space of all
holomorphic functions $f$ on $\mathbb{D}^n$ such that
\[
\|f\|_{H^2(\mathbb{D}^n)} = \left(\sup_{0\leq r< 1}
\int_{\mathbb{T}^n}|f(r e^{i \theta_1}, \ldots, r e^{i
\theta_n})|^2~d {\theta} \right)^{\frac{1}{2}}< \infty,
\]
where $d {\theta}$ is the normalized Lebesgue measure on the $n$-dimensional 
torus $\mathbb{T}^n$, the distinguished boundary of $\mathbb{D}^n$. 
It is a well known and important reproducing kernel Hilbert space
corresponding to the Szeg\"{o} kernel $\mathbb{S}_n$ on $\D^n$,
where
\[
\mathbb{S}_n(\z, \w) = \prod_{j=1}^n (1 - z_j \bar{w}_j)^{-1} \quad
\quad (\z, \w \in \D^n).
\]
Then one can see that
\[
\mathbb{S}_n^{-1}(\z, \w) = \mathop{\sum}_{0 \leq |\bm{k}| \leq n}
(-1)^{|\bm{k}|} \z^{\bm{k}} \bar{\w}^{\bm{k}},
\]
where $|\bm{k}| = \mathop{\sum}_{j=1}^n k_j$ and $0 \leq k_j \leq 1$
for all $j = 1, \ldots, n$.

We can extend this definition to a Hilbert space valued Hardy space also. Let $\cle$ be a Hilbert space, and the $\cle$-valued Hardy space over $\D^n$ be denoted by $H^2_{\cle}(\D^n)$. Then $H^2_{\cle}(\D^n)$ is
the $\cle$-valued reproducing kernel Hilbert space with the
$\clb(\cle)$-valued kernel function
\[
(\z, \w) \mapsto \mathbb{S}_n(\z, \w) I_{\cle} \quad \quad (\z, \w
\in \D^n).
\]
In the sequel, 
we shall identify the vector valued Hardy space $H^2_{\cle}(\D^n)$ with $H^2(\D^n)
\otimes \cle$ by the help of the canonical unitary $U :
H^2_{\cle}(\D^n) \raro H^2(\D^n) \otimes \cle$ defined by
\[
U (\z^{\bm{k}} \eta) = \z^{\bm{k}} \otimes \eta \quad \quad (\bm{k}
\in \mathbb{Z}^n_+, \eta \in \cle).
\]
One can easily observe that
the Hardy space $H^2(\D^n)$, $n>1$ over the unit polydisc can be identified 
via the unitary map $U : H^2(\D^n) \rightarrow H^2(\D) \otimes \cdots \otimes H^2(\D)$, 
where 
\[
{U}(z_1^{k_1} z_2^{k_2} \cdots z_{n}^{k_{n}} ) = z^{k_1}
\otimes z^{k_2} \otimes \cdots \otimes z^{k_n},
\]
with the $n$-fold Hilbert space tensor product of the Hardy space $H^2(\D)$ 
on the unit disc. 
Also we can realize $H^2(\D) \otimes \cdots \otimes H^2(\D)$ as the vector-valued
Hardy space $H^2_{{H^2(\D^{n-1})}}(\D)$ over the unit disc. 
This identification will be helpful to understand the structure of invariant subspaces
of $H^2(\D^n)$.

Let $(M_{z_1}, \ldots,
M_{z_n})$ denote the $n$-tuple of multiplication operators on
$H^2_{\cle}(\D^n)$ by the coordinate functions $\{z_j\}_{j=1}^n$,
that is,
\[
(M_{z_j} f)(\w) = w_j f(\w),
\]
for all $f \in H^2_{\cle}(\D^n)$, $\w \in \D^n$ and $j = 1, \ldots,
n$. It is easy to see that $(M_{z_1}, \ldots,
M_{z_n})$ is an $n$-tuple of \textit{doubly commuting} shifts on $H^2_{\cle}(\D^n)$. 
Evidently, the shift $M_{z_j}$ on
$H^2_{\cle}(\D^n)$ can be identified with $M_{z_j} \otimes I_{\cle}$
on $H^2(\D^n) \otimes \cle$ for $j = 1, \ldots,
n$. This canonical identification will be
used throughout the paper.
 We recall that a closed subspace $\cls$ of $ H^2_{\cle}(\D^n)$ 
is called an {invariant subspace} for
$(M_{z_1}, \ldots, M_{z_n})$ if
$M_{z_j} (\cls) \subseteq \cls$
for all $j = 1, \ldots, n$. 
Let $I$ be any nonempty and proper subset of $\{1, 2, \ldots, n \}$.
We say a closed subspace $\clq$ of $H^2(\D^n)$ is \textit{mixed invariant} if
$M_{z_{j}}(\clq) \subseteq \clq$ for 
$j \in I$, and $M_{z_{j}}^*(\clq) \subseteq \clq$ for $j \in I^{c}$. 
Without loss of generality,
we say that $\clq$ is {\it{mixed invariant}}
under the multiplication operators $(M_{z_1}, \ldots, M_{z_n})$ 
if $M_{z_{j}}(\clq) \subseteq \clq$ for 
$1\leq j \leq k$ and $M_{z_{j}}^*(\clq) \subseteq \clq$, $k+1 \leq j \leq n$
for some integer $k \in \{1, \ldots, n-1 \}$. In addition, $\clq$ is said to be
{\textit{doubly commuting mixed invariant}} if the $n$-tuple
$(P_{\clq}M_{z_1}|_{\clq}, \ldots, P_{\clq}M_{z_n}|_{\clq})$ is doubly commuting.
We denote $H^\infty_{\clb(\cle)}(\D)$ as the Banach
algebra of all $\clb(\cle)$-valued bounded analytic functions on the
open unit disc $\D$ (see \cite{NF}).

%%%%%%%%%%%%%%%%%%%%%%%%%%%%%%%%%%%%%%%%%%%%%%%%%%%%%%%%%%%%%%%%%%

\newsection{commutant of a doubly commuting shifts}

In this section we obtain an explicit representation
of the commutant of an $n$-tuple of doubly commuting shifts.
We also find a representation of a large class of
bounded operators which intertwine with shifts and adjoint of 
shifts, that is, co-shifts. As a by-product we give some examples of
 {\it{mixed invariant}} subspaces 
in the Hardy space over the polydisc.

Let $m, n$ be positive integers. For the sake of simplicity we denote $H^2(\D^{m})$ by $H_{m}$. Here we use a similar technique used in our earlier paper (see \cite{AASS}).
We firstly denote $z_j$ as the coordinate function on $H^2(\D^n)$ for $j=1, \ldots, n$
and $w_i$ as the coordinate function on $H_m$ for $i=1, \ldots, m$.
 We then identify the Hardy space 
$H^2(\D^{n+m})$ as $H_{m}$-valued Hardy space $H^2_{H_{m}}(\D^n)$ over $\D^n$ 
via canonical unitary. With this identification the multiplication operators $(M_{z_1}, \ldots, M_{z_n}, \ldots, M_{z_{n+m}} )$ on $H^2(\D^{n+m})$ can be represented by $(M_{z_1}, \ldots, M_{z_n}, M_{\kappa_1}, \ldots, M_{\kappa_{m}} )$ 
on $H^2_{H{m}}(\D^n)$, where $\kappa_i$ is defined by 
\[
\kappa_i (\bm{z}) = M_{w_{i}} ,
\]
for all $\bm{z} \in \D^n$.
 It is evident that $\kappa_i
\in H^\infty_{\clb(H{_m})}(\D^n)$ is a constant function and the multiplication operator
$M_{\kappa_i}$ on $H^2_{H_{m}}(\D^n)$ is defined by
\[
M_{\kappa_i} f = \kappa_i f \quad \quad (f \in H^2_{H_{m}}(\D^n)),
\]
is a shift on $H^2_{H_{m}}(\D^n)$ for all $i = 1, \ldots, m$.

We now identify $H^2(\D^{n+m})$ with $H^2(\D^n) \otimes
H_m$ by the canonical unitary map 
$\hat{U} : H^2(\D^{n+m}) \raro H^2(\D^n) \otimes H_m$ defined by
\[
\hat{U}(z_1^{k_1} \cdots z_n^{k_n} \cdots z_{n+m}^{k_{n+m}} ) = 
(z_1^{k_1} \cdots z_n^{k_n})\otimes (w_{1}^{k_{n+1}} \cdots w_{m}^{k_{n+m}}),
\]
for all $k_1, \ldots, k_{n+m} \geq 0$. Then one can easily see that
\[
\hat{U} M_{z_j} = (M_{z_j} \otimes I_{H_m}) \hat{U} \qquad (j =1, \ldots, n)
\]
and
\[
\hat{U} M_{z_{n + i}} = (I_{H^2(\D^n)} \otimes K_i) \hat{U} \qquad (i =1, \ldots, m),
\]
where $K_i = M_{w_{i}}$.
Thus the $(n+m)$-tuples $(M_{z_1}, \ldots, 
M_{z_n}, \ldots, M_{z_{n+m}})$ on $H^2(\D^{n+m})$ and 
$(M_{z_1} \otimes I_{H_m}, \ldots, M_{z_n} \otimes I_{H_m}, I_{H^2(\D^n)} \otimes K_1, \ldots, I_{H^2(\D^n)}
\otimes K_m)$ on $H^2(\D^n) \otimes H_m$ are unitarily equivalent.

We now define a unitary map $\tilde{U} : H^2(\D^n) \otimes H_m \raro
H^2_{H_m}(\D^n)$ 
\[
\tilde{U}(z_1^{k_1} \cdots z_n^{k_n} \otimes f) = z_1^{k_1} \cdots z_n^{k_n} f,
\]
for all $k_j \geq 0$ and $f \in H_m$. Then
\[
\tilde{U} (M_{z_j} \otimes I_{H_m}) = M_{z_j}  \tilde{U} \qquad (j =1, \ldots, n).
\]

For $\kappa_i \in H^\infty_{\clb(H_m)}(\D^n)$,
and each operator $M_{\kappa_i}: H^2_{H_m}(\D^n) \raro H^2_{H_m}(\D^n)$
defined by
\[
(M_{\kappa_i} (z_1^{k_1} \cdots z_n^{k_n} f))(\bm{x}) = x_1^{k_1} \cdots x_n^{k_n} (K_i f),
\]
for all $k_j \geq 0$, $f \in H_m$ and $\bm{x} = (x_1, \ldots, x_n) \in \D^n$, is a shift on
$H^2_{H_m}(\D^n)$. It also follows that
\[
\tilde{U} (I_{H^2(\D^n)} \otimes K_i) = M_{\kappa_i} \tilde{U}.
\]
Therefore by setting
$U = \tilde{U} \hat{U}$,
we see that $U : H^2(\D^{n+m}) \raro H^2_{H_m}(\D^n)$ is
a unitary operator satisfying
\[
U M_{z_j} = M_{z_j} U,
\]
and
\[
U M_{z_{n + i}} = M_{\kappa_i} {U},
\]
for all $j = 1, \ldots, n$ and $i = 1, \ldots, m$. 

To summarize this, we have the following:

\begin{thm} \label{thm-1}
Let $m, n$ be positive integers and $H^2(\D^{n+m})$ be the Hardy space over the unit polydisc $\D^{n+m}$.
Then the $(n+m)$-tuples of multiplication operators $(M_{z_1}, \ldots, M_{z_n}, \ldots,
M_{z_{n+m}})$ on $H^2(\D^{n+m})$ and 
$(M_{z_1}, \ldots, M_{z_n}, M_{\kappa_1},
\ldots, M_{\kappa_m})$ on $H^2_{H_m}(\D^n)$ are unitarily
equivalent, where $\kappa_i \in H^\infty_{\clb(H_m)}(\D^n)$ is the
constant function defined by
\[
\kappa_i(\bm{z}) = M_{w_{i}} \in \clb(H_m),
\]
for all $\bm{z} \in \D^n$ and $i = 1, \ldots, m$.
\end{thm}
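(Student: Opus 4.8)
The plan is to realize the claimed unitary equivalence by composing the two canonical unitaries $\hat{U}$ and $\tilde{U}$ constructed above, and to verify the required intertwining relations by a direct computation on the orthonormal basis of monomials. Setting $U = \tilde{U}\hat{U}$, the goal is to check that $U$ is unitary and that $U M_{z_j} = M_{z_j} U$ for $1 \leq j \leq n$, while $U M_{z_{n+i}} = M_{\kappa_i} U$ for $1 \leq i \leq m$.

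First I would confirm that $\hat{U}$ is unitary. The monomials $\{z_1^{k_1}\cdots z_{m+n}^{k_{m+n}} : k_1, \ldots, k_{m+n} \geq 0\}$ form an orthonormal basis of $H^2(\D^{m+n})$, and their images $\{(z_1^{k_1}\cdots z_n^{k_n}) \otimes (z_{n+1}^{k_{n+1}} \cdots z_{m+n}^{k_{m+n}})\}$ form an orthonormal basis of $H^2(\D^n) \otimes H_m$; hence $\hat{U}$ extends to a unitary. The intertwining relations for $\hat{U}$ follow by applying both sides to a basis monomial: multiplication by $z_j$ with $j \leq n$ raises the exponent $k_j$ and affects only the first tensor factor, giving $\hat{U} M_{z_j} = (M_{z_j} \otimes I_{H_m})\hat{U}$, whereas multiplication by $z_{n+i}$ raises $k_{n+i}$ and affects only the second factor, giving $\hat{U} M_{z_{n+i}} = (I_{H^2(\D^n)} \otimes K_i)\hat{U}$ with $K_i = M_{z_{n+i}}$ acting on $H_m$.

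Next I would treat $\tilde{U} : H^2(\D^n) \otimes H_m \raro H^2_{H_m}(\D^n)$, which is the standard identification of the Hilbert space tensor product with the $H_m$-valued Hardy space and is unitary for the same orthonormal-basis reason. On a basis element $z_1^{k_1}\cdots z_n^{k_n} \otimes f$, the operator $M_{z_j} \otimes I_{H_m}$ acts only on the scalar factor, so $\tilde{U}(M_{z_j}\otimes I_{H_m}) = M_{z_j}\tilde{U}$; and since $\kappa_i$ is the constant $\clb(H_m)$-valued function $\kappa_i(\bm{w}) = K_i$, the multiplication operator $M_{\kappa_i}$ sends $z_1^{k_1}\cdots z_n^{k_n} f$ to $z_1^{k_1}\cdots z_n^{k_n}(K_i f)$, which is exactly $\tilde{U}(I_{H^2(\D^n)} \otimes K_i)$ applied to the basis vector. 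Composing, $U = \tilde{U}\hat{U}$ is unitary and satisfies the two displayed intertwining identities, which is the assertion of the theorem.

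Since every ingredient has been made explicit in the construction preceding the statement, there is no genuine obstacle; the only point requiring care is the bookkeeping of which tensor factor each coordinate multiplication acts upon, together with the well-definedness of $M_{\kappa_i}$ as multiplication by an operator-valued constant function on $\D^n$. Once the action on monomials is recorded correctly, the relations are immediate and the theorem follows by composition.
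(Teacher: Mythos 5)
Your proposal is correct and follows essentially the same route as the paper: the paper's ``proof'' is precisely the construction preceding the theorem, namely defining $\hat{U}$ and $\tilde{U}$ on the monomial orthonormal bases, checking the intertwining relations on basis vectors, and setting $U = \tilde{U}\hat{U}$. Your verification of unitarity and the tensor-factor bookkeeping matches the paper's argument step for step.
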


We now return to the representation of the commutant for a tuple of doubly commuting shifts.
Let $n$ be a fixed positive integer. 
Recall that for $\bm{z}= (z_1, \ldots, z_n) \in \mathbb{C}^n$ and for any
multi-index $\bm{k} = (k_1, \ldots, k_n) \in \mathbb{Z}_+^n$, 
we write $\z^{\bm{k}} = z_1^{k_1} \cdots z_n^{k_n}$. Also for an $n$-tuple of 
commuting shifts $V=(V_1, \ldots, V_n)$,  $V^{\bm{k}}$ denotes $V_1^{k_1} \ldots V_n^{k_n}$.
It is also well known (see \cite{DAN-P}) that
if $V=(V_1, \ldots, V_n)$ is an $n$-tuple of doubly
commuting shifts on a Hilbert space $\clh$, then
\[
\clh = \displaystyle{\oplus_{\bm{k} \in \mathbb{Z}^n_{+}} V^{\bm{k}}\clw},
\]
where $\clw = \displaystyle\cap_{j=1}^n Ker(V_j^{*})$.
We now define a map $\Pi: \clh \rightarrow H^2_{\clw}(\D^n)$ by
\begin{align}\label{unitary-dc}
\Pi(V_1^{k_1} \cdots  V_n^{k_n} \eta) = z_1^{k_1} \cdots z_n^{k_n} \eta \qquad (k_j \geq 0, \eta \in \clw).
\end{align}
Then $\Pi$ is clearly a unitary. Moreover,
\[
\Pi V_j = M_{z_j}\Pi \quad \mbox{for}~j=1, \ldots, n.
\]
It is well known that if a bounded operator $X \in \clb(H^2_{\cle}(\D^n))$ commutes with the 
multiplication operators $(M_{z_1}, \ldots, M_{z_n})$ on $H^2_{\cle}(\D^n)$, i.e.,
$XM_{z_j} = M_{z_j}X$ for all $1 \leq j \leq n$, then
$X = M_{\Theta}$ for some $\Theta \in H^{\infty}_{\clb(\cle)}(\D^n)$ (see \cite{BLTT} ). 
These are called analytic Toeplitz operators which is an important class of operators.
Also the commutant of $V=(V_1, \ldots, V_n)$ on $\clh$ can be represented as
the commutant of $(M_{z_1}, \ldots, M_{z_n})$ on $H^2_{\clw}(\D^n)$. 
Hence the commutant of $V=(V_1, \ldots, V_n)$ on $\clh$ can be written
as the form $M_{\Phi}$ on $H^2_{\clw}(\D^n)$, where 
$\Phi \in H^\infty_{\clb(\clw)}(\D^n)$. The question is 
what is the explicit representation of $\Phi$? 
Here we have the following result.

\begin{lem}\label{thm-commutator}
Let $\clh$ be a Hilbert space and $V=(V_1, \ldots, V_n)$ be an $n$-tuple of doubly
commuting shifts on $\clh$. Let $T \in \clb( \clh)$ and let $\Pi$ be the above unitary
defined in (\ref{unitary-dc}).
Then
\[
TV_j = V_jT \quad \mbox{~for all ~}j=1, \ldots, n
\]
if and only if $\Pi T \Pi^{*} =M_{\Phi}$ for some 
$\Phi \in H^\infty_{\clb(\clw)}(\D^n)$ and
\[
\Phi(\bm{z})= P_{\clw}(I_{\clh} -z_1 V_1^{*})^{-1} \ldots (I_{\clh} -z_n V_n^{*})^{-1} T\mid_{\clw}, 
\]
where $\bm{z}=(z_1, \ldots, z_n) \in \D^n$, $\clw = \cap_{j=1}^n Ker(V_j^{*})$.
\end{lem}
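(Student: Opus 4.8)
The plan is to compute the symbol $\Phi$ explicitly by exploiting the Wold-type orthogonal decomposition $\clh = \oplus_{\bm{k} \in \mathbb{Z}^n_+} V^{\bm{k}} \clw$ together with the intertwining relation $\Pi V_j = M_{z_j} \Pi$. Since $T$ commutes with each $V_j$, the conjugated operator $\Pi T \Pi^*$ commutes with each $M_{z_j}$ on $H^2_{\clw}(\D^n)$, so by the cited result on analytic Toeplitz operators it equals $M_\Phi$ for some $\Phi \in H^\infty_{\clb(\clw)}(\D^n)$. The entire content of the lemma is therefore the closed-form expression for $\Phi(\bm{z})$, which I would extract by evaluating $M_\Phi$ on the constant functions coming from $\clw$ and reading off the Taylor coefficients.

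\textbf{Step 1 (reduce to the action on $\clw$).} Since $M_\Phi$ is an analytic Toeplitz operator, its symbol is determined by how it sends a constant vector $\eta \in \clw \subseteq H^2_{\clw}(\D^n)$: writing $\Phi(\bm{z}) = \sum_{\bm{k} \in \mathbb{Z}^n_+} \bm{z}^{\bm{k}} \Phi_{\bm{k}}$ with coefficients $\Phi_{\bm{k}} \in \clb(\clw)$, one has $(M_\Phi \eta)(\bm{z}) = \sum_{\bm{k}} \bm{z}^{\bm{k}} (\Phi_{\bm{k}} \eta)$. Hence the coefficient $\Phi_{\bm{k}} \eta$ is exactly the $\bm{z}^{\bm{k}}$-component of $\Pi T \Pi^* \eta$ in $H^2_{\clw}(\D^n)$.

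\textbf{Step 2 (translate back through $\Pi$).} Using $\Pi^* \eta = \eta \in \clw \subseteq \clh$ and the definition $\Pi(V^{\bm{k}} \zeta) = \bm{z}^{\bm{k}} \zeta$ for $\zeta \in \clw$, the $\bm{z}^{\bm{k}}$-coefficient of $\Pi T \eta$ is the projection onto the summand $V^{\bm{k}} \clw$, transported back to $\clw$. Concretely, if $P_{\clw}$ is the orthogonal projection of $\clh$ onto $\clw$, then the component of $T\eta$ lying in $V^{\bm{k}} \clw$ is $V^{\bm{k}} P_{\clw}(V^*)^{\bm{k}} T \eta$, so that $\Phi_{\bm{k}} \eta = P_{\clw}(V_1^*)^{k_1} \cdots (V_n^*)^{k_n} T \eta$. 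Here I use that on the summand $V^{\bm{k}}\clw$ the partial isometry $V^{\bm{k}}$ is isometric, so $(V^*)^{\bm{k}}$ recovers the $\clw$-coordinate; this is where double commutativity is essential, since it guarantees the summands $V^{\bm{k}}\clw$ are mutually orthogonal and that the $V_j^*$ act compatibly across coordinates.

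\textbf{Step 3 (resum the geometric series).} Substituting into $\Phi(\bm{z}) = \sum_{\bm{k}} \bm{z}^{\bm{k}} \Phi_{\bm{k}}$ gives
\[
\Phi(\bm{z}) = P_{\clw} \Big( \sum_{\bm{k} \in \mathbb{Z}^n_+} z_1^{k_1} \cdots z_n^{k_n} (V_1^*)^{k_1} \cdots (V_n^*)^{k_n} \Big) T \mid_{\clw},
\]
and since the $V_j^*$ commute pairwise the multi-indexed sum factors as a product of geometric series $\sum_{k_j \geq 0} z_j^{k_j} (V_j^*)^{k_j} = (I_{\clh} - z_j V_j^*)^{-1}$, valid for $|z_j|<1$ because each $V_j$ is a contraction. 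This yields the asserted formula
\[
\Phi(\bm{z}) = P_{\clw}(I_{\clh} - z_1 V_1^*)^{-1} \cdots (I_{\clh} - z_n V_n^*)^{-1} T \mid_{\clw}.
\]
The main obstacle is the bookkeeping in Step 2: one must verify carefully that under the doubly commuting hypothesis the coordinate projections onto $V^{\bm{k}}\clw$ are correctly realized by the iterated backward shifts $(V^*)^{\bm{k}}$ restricted appropriately, and that the resulting operator-valued series converges in the right sense (e.g. strongly on $H^2_{\clw}(\D^n)$) to justify interchanging summation with the application of $P_{\clw}$ and $T$. Everything else is a direct computation once the orthogonal decomposition and the intertwining relations are in place.
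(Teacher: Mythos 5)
Your proposal is correct and follows essentially the same route as the paper: both obtain $\Pi T\Pi^* = M_\Phi$ from the commutant result for $(M_{z_1},\ldots,M_{z_n})$, then evaluate on a constant vector $\eta\in\clw$ (using $\Pi^*\eta=\eta$) together with the Wold-type projection formula $I_{\clh}=\mbox{SOT-}\sum_{\bm{k}}V^{\bm{k}}P_{\clw}V^{*\bm{k}}$ to identify the Taylor coefficients as $P_{\clw}V^{*\bm{k}}T|_{\clw}$, and finally resum the geometric series into the product of resolvents $(I_{\clh}-z_jV_j^*)^{-1}$. The only cosmetic difference is that the paper applies $\Pi$ directly to the decomposition of $T\eta$ rather than phrasing it as coefficient extraction, which is the same computation.
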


\begin{proof}
Let $h \in \clh$. Then $h$ can be written as 
\[
h =\displaystyle{\sum_{\bm{k} } V^{\bm{k}} \eta_{\bm{k}}}, 
\]
for some $\eta_{{\bm{k}}}\in \clw$, ${\bm{k}} =(k_1, \ldots, k_n) \in \mathbb{Z}^n_{+}$ 
(as $\clh = \displaystyle{\oplus_{\bm{k} \in \mathbb{Z}^n_{+}} V^{\bm{k}}\clw}$). 
Applying $P_{\clw} V^{* {\bm{m}} }$ to both sides and since
 $\clw =  \cap_{j=1}^n Ker(V_j^{*})$, we have $\eta_{{\bm{m}}} =
P_{\clw} V^{*{\bm{m}}} h$ for all ${\bm{m}} =(m_1, \ldots, m_n) \in \mathbb{Z}^n_{+}$. 
Therefore, for any $h \in \clh$,
\begin{equation}\label{eq-F}
h = \displaystyle{\sum_{\bm{k} } V^{\bm{k}} P_{\clw} V^{ *\bm{k}} h}.
\end{equation}
Now let $TV_j = V_jT$ {for all} $j=1, \ldots, n$. Then there exists a bounded analytic function
$\Phi \in H^{\infty}_{\clb(\clw)}(\D^n)$ such that $\Pi T \Pi^*
= M_{\Phi}$. For each $\bm{w} =(w_1, \ldots, w_n) \in \mathbb{D}^n$ and $\eta \in \clw$ we
have
\[
\begin{split}
\Phi(\bm{w}) \eta & = (M_{\Phi} \eta)(\bm{w})
\\
& = (\Pi T \Pi^* \eta) (\bm{w})
\\
& = (\Pi T \eta)(\bm{w}),
\end{split}
\]
as $\Pi^* \eta = \eta$. Using (\ref{eq-F}), we have
\[
T \eta = \displaystyle{\sum_{\bm{k} } V^{\bm{k}} P_{\clw} V^{ *\bm{k}} T \eta} ,
\]
it follows that
\[
\begin{split}
\Phi(\bm{w}) \eta & = (\Pi \left[\displaystyle{\sum_{\bm{k} } V^{\bm{k}} P_{\clw} V^{ *\bm{k}} T \eta} \right])(\bm{w})
\\
& = \left[\displaystyle{\sum_{\bm{k} } M_{\bm{z}}^{\bm{k}} P_{\clw} V^{ *\bm{k}} T \eta} \right](\bm{w})
\\
& = \left[\displaystyle{\sum_{\bm{k} } {\bm{w}}^{\bm{k}} P_{\clw} V^{ *\bm{k}} T \eta} \right]
\\
& = P_{\clw} \left[\displaystyle{\sum_{\bm{k} } {\bm{w}}^{\bm{k}}  V^{ *\bm{k}} T \eta} \right]
\\
& = P_{\clw} (I_{\clh} - w_1 V_1^*)^{-1} \cdots (I_{\clh} - w_n V_n^*)^{-1} T \eta.
\end{split}
\]
Therefore
\[
\Phi(\bm{z}) = P_{\clw} (I_{\clh} - z_1 V_1^*)^{-1} \cdots (I_{\clh} - z_n V_n^*)^{-1} T|_{\clw} \quad \quad
(\bm{z} \in \D^n),
\]
as required. We omit the sufficient part as it holds trivially. 

This completes the proof.
\end{proof}

\begin{Remark}\label{Remark-commutant-1}
In the proof we have used the standard projection
formula 
$I_{\clh} = \displaystyle{\mbox{SOT}-{\sum_{\bm{k} } V^{\bm{k}} P_{\clw} V^{ *\bm{k}}}}$. 
Note that the function $\Phi$
defined in Lemma \ref{thm-commutator} is well defined and a $\clb(\clw)$-valued
holomorphic function in the unit polydisc $\D^n$ as $\|w_j V_j^*\| = |w_j| \|V_j\| < 1$
for all $w_j \in \D$. Now if $T$ commutes with a shift $V_1$, then from the 
above result, we have the representation
\[
\Phi(z) = P_{\clw} (I_{\clh} - z V_1^*)^{-1} T|_{\clw} \quad \quad
(z \in \D, \clw = Ker(V_1^{*}))
\]
which was obtained in our previous paper \cite{MSS}.
\end{Remark}

We are now in a position to find the explicit representation 
of the commutant of a tuple of doubly commuting shifts and co-shifts.

\begin{thm}
Let $T$ be a bounded linear operator on $H^2_{H_m}(\D^n)$. Then $T$ satisfies
\[
TM_{z_j}=M_{z_j}T \quad  \mbox{for~} j=1, \ldots, n 
\]
and
\[
TM_{\kappa_i}^{*} = M_{\kappa_i}^{*}T \quad  \mbox{for ~} i=1, \ldots, m
\]
if and only if $T =M_{\Phi}$, where $\Phi \in H^\infty_{\clb(H_m )}(\D^n)$ and
\[
\Phi(\bm{z})= \sum_{\bm{k} \in \mathbb{Z}_{+}^n}M_{\Theta_{\bm{k}}}^{*} \bm{z}^{\bm{k}},
\]
and 
\[
 \Theta_{\bm{k}}(\bm{w})= \displaystyle \sum_{\bm{l} \in \mathbb{Z}_{+}^m}{\langle M^{\bm{k}}_{\bm{z}} \bm{1}, T \bm{w}^{\bm{l}} \rangle_{H^2_{H_m}(\D^n)} \bm{w}^{\bm{l}}}
\] 
for all $\bm{z} =(z_1, \ldots, z_n)\in \D^n$, $\bm{w} =(w_1, \ldots, w_m)\in \D^m$.
\end{thm}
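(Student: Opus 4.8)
The plan is to reduce the two-sided hypothesis to two applications of the scalar/operator multiplier theorem, exploiting the tensor factorization $M_{z_j} = M_{z_j}\otimes I_{H_m}$ and $M_{\kappa_i} = I_{H^2(\D^n)}\otimes K_i$ implicit in Theorem \ref{thm-1}. First I would handle the "analytic" variables. Since $(M_{z_1},\ldots,M_{z_n})$ is an $n$-tuple of doubly commuting shifts on $H^2_{H_m}(\D^n)$ whose common wandering subspace is exactly the space of constants $\clw=\cap_{j=1}^n\ker M_{z_j}^{*}=H_m$, and since $T$ commutes with each $M_{z_j}$, Lemma \ref{thm-commutator} applies verbatim and gives $T=M_\Phi$ with $\Phi\in H^\infty_{\clb(H_m)}(\D^n)$ and
\[
\Phi(\bm{z})= P_{H_m}(I-z_1M_{z_1}^{*})^{-1}\cdots(I-z_nM_{z_n}^{*})^{-1}T|_{H_m}.
\]
Expanding each resolvent as $(I-z_jM_{z_j}^{*})^{-1}=\sum_{k_j\geq 0}z_j^{k_j}M_{z_j}^{*k_j}$ and pulling the scalars out past $P_{H_m}$ yields $\Phi(\bm{z})=\sum_{\bm{k}\in\mathbb{Z}_+^n}\bm{z}^{\bm{k}}\,\Theta_{\bm{k}}$, where the operator coefficients are $\Theta_{\bm{k}}:=P_{H_m}M_{\bm{z}}^{*\bm{k}}T|_{H_m}\in\clb(H_m)$.

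Next I would use the second hypothesis to pin down the structure of each $\Theta_{\bm{k}}$ as an operator on $H_m=H^2(\D^m)$. The key point is that $M_{\kappa_i}^{*}=I_{H^2(\D^n)}\otimes K_i^{*}$ acts only on the coefficient factor, so it commutes with $M_{\bm{z}}^{*\bm{k}}$, and the inclusion $\iota\colon H_m\hookrightarrow H^2_{H_m}(\D^n)$ of constants intertwines $K_i^{*}$ with $M_{\kappa_i}^{*}$ (with $\iota^{*}=P_{H_m}$). Using $TM_{\kappa_i}^{*}=M_{\kappa_i}^{*}T$ together with these two commutations, a short computation
\[
\Theta_{\bm{k}}K_i^{*}=P_{H_m}M_{\bm{z}}^{*\bm{k}}TM_{\kappa_i}^{*}\iota
=P_{H_m}M_{\kappa_i}^{*}M_{\bm{z}}^{*\bm{k}}T\iota
=K_i^{*}\Theta_{\bm{k}}
\]
shows that $\Theta_{\bm{k}}$ commutes with every $K_i^{*}$, i.e.\ $\Theta_{\bm{k}}^{*}$ commutes with the doubly commuting coordinate shifts $(K_1,\ldots,K_m)=(M_{z_{n+1}},\ldots,M_{z_{n+m}})$ on $H^2(\D^m)$. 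By the commutant description recalled before Lemma \ref{thm-commutator} (the scalar case $\clw=\C$ of \cite{BLTT}), $\Theta_{\bm{k}}^{*}=M_{\Theta_{\bm{k}}}$ for some symbol $\Theta_{\bm{k}}\in H^\infty(\D^m)$; that is, $\Theta_{\bm{k}}=M_{\Theta_{\bm{k}}}^{*}$, which is exactly the asserted form $\Phi(\bm{z})=\sum_{\bm{k}}M_{\Theta_{\bm{k}}}^{*}\bm{z}^{\bm{k}}$.

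It then remains to compute the Fourier coefficients of the symbol $\Theta_{\bm{k}}$ on $\D^m$. Since $\Theta_{\bm{k}}=M_{\Theta_{\bm{k}}}^{*}$, its $\bm{l}$-th Taylor coefficient is $\langle M_{\Theta_{\bm{k}}}\bm{1},\bm{w}^{\bm{l}}\rangle_{H_m}=\langle\Theta_{\bm{k}}^{*}\bm{1},\bm{w}^{\bm{l}}\rangle_{H_m}$, and unwinding $\Theta_{\bm{k}}^{*}=P_{H_m}T^{*}M_{\bm{z}}^{\bm{k}}\iota$ (using $\iota^{*}=P_{H_m}$ and that $\bm{w}^{\bm{l}}$ is a constant in $H^2_{H_m}(\D^n)$) gives $\langle M_{\bm{z}}^{\bm{k}}\bm{1},T\bm{w}^{\bm{l}}\rangle_{H^2_{H_m}(\D^n)}$. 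Summing over $\bm{l}$ reproduces the stated formula for $\Theta_{\bm{k}}(\bm{w})$. For the (routine) sufficiency direction I would observe that any $M_\Phi$ commutes with the $M_{z_j}$, and that each coefficient $M_{\Theta_{\bm{k}}}^{*}$ commutes with $K_i^{*}$ because $M_{\Theta_{\bm{k}}}$ commutes with the analytic Toeplitz operator $K_i$ on $H^2(\D^m)$; reversing the computation above then yields $TM_{\kappa_i}^{*}=M_{\kappa_i}^{*}T$.

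The main obstacle is the bookkeeping in the middle step: one must keep track of which operators act on the $\D^n$-factor and which on the $H_m$-factor so that $P_{H_m}$, $M_{\bm{z}}^{*\bm{k}}$ and $M_{\kappa_i}^{*}$ can be freely commuted, and one should justify that the series $\sum_{\bm{k}}\bm{z}^{\bm{k}}\Theta_{\bm{k}}$ and the Taylor expansion of each $\Theta_{\bm{k}}$ converge in the strong operator topology via the standard projection formula $I_{\clh}=\mathrm{SOT}\!-\!\sum_{\bm{k}}V^{\bm{k}}P_{\clw}V^{*\bm{k}}$ used in Lemma \ref{thm-commutator}; everything else is an application of results already in hand.
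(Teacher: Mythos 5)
Your proposal is correct and follows essentially the same route as the paper: obtain $T=M_\Phi$ from the commutant result (Lemma \ref{thm-commutator}), use the intertwining with $M_{\kappa_i}^{*}$ to show each operator coefficient of $\Phi$ commutes with the adjoint coordinate shifts on $H^2(\D^m)$ and hence equals $M_{\Theta_{\bm{k}}}^{*}$, then recover the symbols' Fourier coefficients as $\langle M_{\bm{z}}^{\bm{k}}\bm{1}, T\bm{w}^{\bm{l}}\rangle$. The only cosmetic difference is that you derive the coefficient commutation by operator identities involving the inclusion $\iota$ and $P_{H_m}$, whereas the paper evaluates both sides of $TM_{\kappa_i}^{*}=M_{\kappa_i}^{*}T$ on monomials $\bm{w}^{\bm{r}}$ and compares Taylor coefficients; the substance is identical.
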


\begin{proof}
Suppose that $T$ is a bounded linear operator on $H^2_{H_m}(\D^n)$ satisfying
\[
TM_{z_j}=M_{z_j}T \quad  \mbox{for~} j=1, \ldots, n 
\]
and
\[
TM_{\kappa_i}^{*} = M_{\kappa_i}^{*}T \quad  \mbox{for~} i=1, \ldots, m.
\]
From the first identity, we have $T = M_{\Phi}$, where $\Phi \in H^\infty_{\clb(H_m)}(\D^n)$.
Let
\[
\Phi( \bm{z})= \sum_{\bm{k}} T_{\bm{k}} {\bm{z}}^{\bm{k}} \quad ({\bm{z}}\in \D^n, ~ T_{\bm{k}} \in \clb(H_m)).
\]
Now $H^2_{H_m}(\D^n) = \overline{\mbox{span}} \{\bm{z}^{\bm{k}}\bm{w}^{\bm{l}}: \bm{k} \in \mathbb{Z}_{+}^n, ~\bm{l} \in \mathbb{Z}_{+}^m  \}$. 
Since $T$ satisfies
\[
TM_{\kappa_i}^{*} = M_{\kappa_i}^{*}T \quad  \mbox{for~} i=1, \ldots, m,
\]
we have
\begin{align*}
TM_{\kappa_i}^{*}( \bm{w}^{\bm{r}} )=TM_{\kappa_i}^{*}(\bm{w}^{\bm{r}} \bm{z}^0) & =  (\sum_{\bm{k}} T_{\bm{k}} {\bm{z}}^{\bm{k}}) M^{*}_{w_i} (\bm{w}^{\bm{r}}) \\ 
& = \sum_{\bm{k}} (T_{\bm{k}} M_{w_i}^{*}( \bm{w}^{\bm{r}} )){\bm{z}}^{\bm{k}} 
\end{align*}
and
\begin{align*}
M_{\kappa_i}^{*}T( \bm{w}^{\bm{r}} )=  M_{\kappa_i}^{*} (\sum_{\bm{k}} T_{\bm{k}} {\bm{z}}^{\bm{k}}) (\bm{w}^{\bm{r}}) = \sum_{\bm{k}} (M_{w_i}^{*}T_{\bm{k}} ( \bm{w}^{\bm{r}} )){\bm{z}}^{\bm{k}}. \\
\end{align*}
From the above we have,
\[
T_{\bm{k}} M_{w_i}^{*}( \bm{w}^{\bm{r}} ) = M_{w_i}^{*}T_{\bm{k}} ( \bm{w}^{\bm{r}} )
\]
for all $\bm{w} =(w_1, \ldots, w_m) \in \D^m$ and $\bm{k} \in \mathbb{Z}_{+}^n $. Therefore
\[
T_{\bm{k}} M_{w_i}^{*} = M_{w_i}^{*} T_{\bm{k}}  ~~~\mbox{ i.e.,}~~~M_{w_i}T_{\bm{k}}^{*} = T_{\bm{k}}^{*} M_{w_i}
\]
for all $i=1, \ldots, m$. Hence $T_{\bm{k}}^{*} = M_{\Theta_{\bm{k}}} $
for some $\Theta_{\bm k} \in H^\infty(\D^m)$ for all ${\bm k}$. Therefore
\[
\Phi( \bm {z})= \sum_{\bm{k}} M^{*}_{\Theta_{\bm {k}}} {\bm{z}^{\bm{k}}} \quad (\bm{z} \in \D^n).
\]

Now we will find $\Theta_{\bm {k}} \in H^\infty(\D^m)$ explicitly:

We know in view of Lemma \ref{thm-commutator} that
\begin{align*}
\Phi(\bm {z}) & =  \displaystyle{\sum_{\bm{k} } (P_{H_m}  M_{\bm {z}}^{ *\bm{k}} T\mid_{H_m }) {\bm{z}}^{\bm{k}} }.
\end{align*}
Hence
\[
M^{*}_{\Theta_{\bm {k}}}= P_{H_m}  M_{\bm {z}}^{ *\bm{k}} T\mid_{H_m },
\]
for all $\bm {k}$. 
We shall now calculate each $\Theta_{\bm{k}} \in H^\infty(\D^{m})$. 
Suppose $\Theta_{\bm{k}}( \bm{w}) = \sum_{\bm{l}} a_{\bm{l}}\bm{w}^{\bm{l}}$.
Then
\begin{align*}
\langle (\sum_{\bm{l}} a_{\bm{l}}\bm{w}^{\bm{l}})\bm{1},  \bm{w}^{\bm r} \rangle_{H^2(\D^m)} 
& = \langle M_{\Theta_{\bm{k}}} \bm{1},  \bm{w}^{\bm r} \rangle_{H^2(\D^m)} \\
& = \langle \bm{1}, M_{\Theta_{\bm{k}}}^{*} \bm{w}^{\bm r} \rangle_{H^2(\D^m)} \\
& = \langle \bm{1}, P_{H_m}  M_{\bm {z}}^{ *\bm{k}} T \bm{w}^{\bm r} \rangle_{H^2(\D^m)} \\
& = \langle  M_{\bm {z}}^{ \bm{k}} \bm{1},  T \bm{w}^{\bm r} \rangle_{H^2_{H_m}(\D^n)}.
\end{align*}
Since $\langle \bm {w}^{\bm {l}}, \bm {w}^{\bm {r}} \rangle =0$ for $ \bm{l} \neq \bm{r}$ 
and $\bm{w}^{0}=\bm{1}$, a constant function on $H^2(\D^m)$, from the above we have
$a_{\bm r} = \langle  M_{\bm {z}}^{ \bm{k}} \bm{1},  T \bm{w}^{\bm r} \rangle_{H^2_{H_m}(\D^n)}$ 
for all $\bm r \in \mathbb{Z}_{+}^m$. Thus
\[
\Phi( \bm {z})= \sum_{\bm{k}} M^{*}_{\Theta_{\bm {k}}} {\bm{z}^{\bm{k}}} \quad (\bm{z} \in \D^n).
\]
where
\[
\Theta_{\bm {k}} (\bm{w}) = \displaystyle \sum_{\bm{l}} \langle  M_{\bm {z}}^{ \bm{k}} \bm{1},  T \bm{w}^{\bm l} \rangle_{H^2_{H_m}(\D^n)}\bm{w}^{\bm{l}}.
\]
The converse follows easily. This finishes the proof.
\end{proof}

We know that if $T \in \clb( H^2(\D))$ such that $TM_z = M_zT$, then
$\overline{ran}(T)$ is an $M_z$-invariant subspace of $H^2(\D)$.
In other words, for any $\phi \in H^{\infty}(\D)$, $\overline{ran}(M_{\phi})$
is an $M_z$-invariant subspace of $H^2(\D)$. Similarly 
let $T \in \clb( H^2(\D^{n+m}))$.
Suppose that 
\[
TM_{z_j}=M_{z_j} T \quad  \mbox{and} \quad TM_{z_{n+i}}^{*} = M_{z_{n+i}}^{*}T,
\]
for $j =1, \ldots, n$ and $i =1, \ldots, m$. Then
$\overline{ran}(T)$ is invariant under 
$(M_{z_1},\ldots,  M_{z_n}, M_{z_{n+1}}^*, \ldots, M_{z_{n+m}}^*)$
on $H^2(\D^{n+m})$. Thus from the above theorem,
we can readily find some examples of {\it{mixed invariant}} subspaces.

\begin{Corollary}
Let $T \in \clb( H^2_{H_m}(\D^n))$. Then $\overline{ran}(T)$  
is a \it{mixed invariant} subspace under
$(M_{z_1},\ldots,  M_{z_n}, M_{\kappa_{1}}^*, \ldots, M_{\kappa_{m}}^*) $
if 
$T =M_{\Phi}$, where $\Phi \in H^\infty_{\clb(H_m)}(\D^n)$ is given by
\[
\Phi(\bm{z})= \sum_{\bm{k} \in \mathbb{Z}_{+}^n}M_{\Theta_{\bm{k}}}^{*} \bm{z}^{\bm{k}},
\]
and
\[
\Theta_{\bm{k}}(\bm{w})= \displaystyle \sum_{\bm{l} \in \mathbb{Z}_{+}^m}{\langle
M^{\bm{k}}_{\bm{z}} \bm{1}, T \bm{w}^{\bm{l}} \rangle_{H^2_{H_m}(\D^n)} \bm{w}^{\bm{l}}}
\]
for all $\bm{z} =(z_1, \ldots, z_n)\in \D^n$ and $\bm{w} =(w_1, \ldots, w_m)\in \D^m$.
\end{Corollary}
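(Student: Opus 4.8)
The plan is to read this statement as the conjunction of two facts already available: the characterization, obtained in the preceding theorem, of the bounded operators on $H^2_{H_m}(\D^n)$ that commute with $M_{z_1},\dots,M_{z_n}$ and with $M_{\kappa_1}^*,\dots,M_{\kappa_m}^*$ as exactly the $M_\Phi$ with $\Phi(\bm z)=\sum_{\bm k}M_{\Theta_{\bm k}}^*\bm z^{\bm k}$ of the displayed form; and the observation recorded just above, that any such intertwiner has mixed invariant range. I would therefore establish the two implications separately, handling the backward one directly and reserving the work for the forward one.

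For the backward implication, let $T=M_\Phi$ with $\Phi$ of the stated form. By the preceding theorem this is equivalent to $TM_{z_j}=M_{z_j}T$ and $TM_{\kappa_i}^*=M_{\kappa_i}^*T$ for all $j,i$. Hence for each $h$ we get $M_{z_j}(Th)=T(M_{z_j}h)\in ran(T)$ and $M_{\kappa_i}^*(Th)=T(M_{\kappa_i}^*h)\in ran(T)$, so after taking closures $M_{z_j}\,\overline{ran}(T)\subseteq\overline{ran}(T)$ and $M_{\kappa_i}^*\,\overline{ran}(T)\subseteq\overline{ran}(T)$; that is, $\overline{ran}(T)$ is mixed invariant.

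For the forward implication I would keep the hypothesis in its literal form, $M_{z_j}\,\overline{ran}(T)\subseteq\overline{ran}(T)$ and $M_{\kappa_i}^*\,\overline{ran}(T)\subseteq\overline{ran}(T)$, and try to recover the two operator identities; once these hold, the preceding theorem yields $T=M_\Phi$ with $\Phi(\bm z)=\sum_{\bm k}M_{\Theta_{\bm k}}^*\bm z^{\bm k}$, and the coefficients are read off from Lemma~\ref{thm-commutator} via $M_{\Theta_{\bm k}}^*=P_{H_m}M_{\bm z}^{*\bm k}T|_{H_m}$, giving on pairing against $\bm w^{\bm l}$ in $H^2(\D^m)$ the stated formula $\Theta_{\bm k}(\bm w)=\sum_{\bm l}\langle M_{\bm z}^{\bm k}\bm 1,\,T\bm w^{\bm l}\rangle\,\bm w^{\bm l}$. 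The main obstacle is exactly the recovery step: invariance of the subspace $\overline{ran}(T)$ says only that $M_{z_j}Th$ and $M_{\kappa_i}^*Th$ belong to $\overline{ran}(T)$, whereas the intertwining identities demand that they equal $T(M_{z_j}h)$ and $T(M_{\kappa_i}^*h)$. To close this gap I would expand $T$ along the orthogonal decomposition $H^2_{H_m}(\D^n)=\bigoplus_{\bm k}M_{\bm z}^{\bm k}H_m$ furnished by the doubly commuting shifts, use the projection formula $I=\sum_{\bm k}M_{\bm z}^{\bm k}P_{H_m}M_{\bm z}^{*\bm k}$ (strong operator topology) from Lemma~\ref{thm-commutator} to isolate its coefficient operators $T_{\bm k}$, and try to force each $T_{\bm k}$ into co-analytic Toeplitz form; I expect that completing this is the genuinely delicate point, since it requires bringing in the additional structure coming from the construction preceding the statement rather than the bare invariance of $\overline{ran}(T)$.
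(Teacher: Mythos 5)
Your backward implication is correct, and it is in fact all the paper itself supplies: the corollary is stated without any proof, as an immediate consequence of Theorem 3.3 together with the remark printed just before it (intertwining implies the closed range is mixed invariant). The genuine problem is the forward implication, and you correctly sensed where it lives but not that it is fatal: mixed invariance of $\overline{ran}(T)$ is a property of the \emph{subspace} $\overline{ran}(T)$ alone and places no constraint on the operator $T$ beyond its range, so the recovery step you propose cannot be completed --- the implication, read literally, is false. Concretely, let $U$ be any unitary on $H^2_{H_m}(\D^n)$ with $UM_{z_1}\neq M_{z_1}U$ (for instance the symmetry exchanging the basis vectors $\bm{1}$ and $z_1\bm{1}$ and fixing all other monomials $\bm{z}^{\bm{k}}\bm{w}^{\bm{l}}$). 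Then $\overline{ran}(U)=H^2_{H_m}(\D^n)$ is trivially invariant under $(M_{z_1},\ldots,M_{z_n},M_{\kappa_1}^*,\ldots,M_{\kappa_m}^*)$, yet $U$ is not $M_\Phi$ for any $\Phi\in H^\infty_{\clb(H_m)}(\D^n)$, since multipliers commute with the $M_{z_j}$. More generally, $M_\Phi\circ W$ with $W$ unitary has the same closed range as $M_\Phi$ but is a multiplier only when $W$ itself intertwines. This also shows why your concluding strategy stalls at an earlier stage than you anticipate: the expansion $\Phi(\bm{z})=\sum_{\bm{k}}T_{\bm{k}}\bm{z}^{\bm{k}}$ and the coefficient formula $M_{\Theta_{\bm{k}}}^*=P_{H_m}M_{\bm{z}}^{*\bm{k}}T|_{H_m}$ from Lemma \ref{thm-commutator} both presuppose that $T$ already commutes with $M_{z_1},\ldots,M_{z_n}$, which is precisely what the bare hypothesis on $\overline{ran}(T)$ fails to deliver; there are no coefficient operators $T_{\bm{k}}$ to ``force into co-analytic Toeplitz form'' until that commutation is available.

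Relative to the paper, then, the situation is this: the paper's ``if and only if'' can only be salvaged by reading the hypothesis of the forward direction as the intertwining relations $TM_{z_j}=M_{z_j}T$ and $TM_{\kappa_i}^*=M_{\kappa_i}^*T$ themselves (making the corollary a mere restatement of Theorem 3.3 recording that such $T$ have mixed invariant range), since the stated hypothesis is strictly weaker. Your proposal is honest in flagging the recovery step as ``the genuinely delicate point,'' but as submitted it proves only one direction, and the direction left open is not delicate but false as stated; the correct resolution is to exhibit a counterexample along the lines above and note that the corollary's converse needs the stronger hypothesis, not to search for additional structure that would close the gap.
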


\section{Doubly commuting mixed invariant subspaces}
In this section we obtain an explicit representation of 
{\it{doubly commuting mixed invariant subspaces}}. 
We start with a known result (see {(\cite{AASS})} for more details).

\begin{lem} \label{Mul-lemma1}
	Let $\clf$ and $\cle$ be Hilbert spaces and
	$M_{\Phi}: H^2_{\clf}(\D^n) \rightarrow H^2_{\cle}(\D^n) $ be an isometric multiplier. 
	Then $\dim(\clf) \leq \dim(\cle)$. 
\end{lem}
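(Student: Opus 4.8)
The plan is to reduce the statement to a pointwise (boundary) assertion: I will show that an isometric multiplier forces its boundary symbol $\Phi(\zeta)$ to be an isometry from $\clf$ into $\cle$ for almost every $\zeta \in \mathbb{T}^n$, after which the dimension inequality is immediate. First I would record that $\Phi \in H^\infty_{\clb(\clf,\cle)}(\D^n)$, and that, identifying $H^2_{\clf}(\D^n)$ and $H^2_{\cle}(\D^n)$ isometrically with the corresponding subspaces of $L^2_{\clf}(\mathbb{T}^n)$ and $L^2_{\cle}(\mathbb{T}^n)$ via boundary values, the operator $M_{\Phi}$ is multiplication by the $L^\infty_{\clb(\clf,\cle)}(\mathbb{T}^n)$ boundary function of $\Phi$. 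The isometry hypothesis $M_{\Phi}^* M_{\Phi} = I_{H^2_{\clf}(\D^n)}$ then reads
\[
\int_{\mathbb{T}^n} \langle (\Phi(\zeta)^*\Phi(\zeta) - I_{\clf})\, f(\zeta),\, f(\zeta)\rangle\, d\theta = 0 \qquad (f \in H^2_{\clf}(\D^n)).
\]

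Next I would extract the pointwise identity by a Fourier-coefficient argument. Testing the displayed equation against the monomials $f = \z^{\bm k}\eta$ and $g = \z^{\bm k'}\eta'$ with $\bm k, \bm k' \in \mathbb{Z}_+^n$ and $\eta,\eta' \in \clf$, and polarizing, I would obtain
\[
\int_{\mathbb{T}^n} \langle (\Phi(\zeta)^*\Phi(\zeta) - I_{\clf})\eta,\, \eta'\rangle\, \zeta^{\bm k}\bar\zeta^{\bm k'}\, d\theta = 0
\]
for all such $\bm k,\bm k'$. Since $\{\bm k - \bm k' : \bm k, \bm k' \in \mathbb{Z}_+^n\} = \mathbb{Z}^n$, every Fourier coefficient of the scalar $L^\infty(\mathbb{T}^n)$ function $\zeta \mapsto \langle(\Phi(\zeta)^*\Phi(\zeta) - I_{\clf})\eta,\eta'\rangle$ vanishes, so this function is $0$ almost everywhere. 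Running $\eta,\eta'$ over a countable dense subset of $\clf$ (this is where separability enters) and intersecting the associated full-measure sets, I conclude that $\Phi(\zeta)^*\Phi(\zeta) = I_{\clf}$ for a.e.\ $\zeta \in \mathbb{T}^n$; that is, $\Phi(\zeta)$ is an isometry of $\clf$ into $\cle$ for almost every $\zeta$.

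Finally, I would fix any single boundary point $\zeta_0$ at which $\Phi(\zeta_0)$ is an isometry. Then $\Phi(\zeta_0)$ is injective and $\Phi(\zeta_0)\clf$ is a closed subspace of $\cle$ isometrically isomorphic to $\clf$, whence $\dim(\clf) = \dim(\Phi(\zeta_0)\clf) \leq \dim(\cle)$, as claimed. The routine parts (orthonormality of the monomials, polarization, and the vanishing-coefficient bookkeeping) are harmless; the genuine point requiring care is the passage to the boundary, namely justifying that $M_{\Phi}$ is multiplication by an $L^\infty$ operator-valued boundary function whose values $\Phi(\zeta)$ may be manipulated pointwise, and that ``all Fourier coefficients vanish for each pair in a dense set'' upgrades to a single almost-everywhere operator identity. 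This is standard for bounded analytic operator-valued functions on the polydisc, but it is the step I would write out most carefully.
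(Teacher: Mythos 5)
The paper contains no proof of this lemma at all: it is stated as a known result imported from the authors' earlier work \cite{AASS}, so there is no internal argument for you to match. Judged on its own, your proof is correct, and it is the natural (and, as far as the cited literature goes, the standard) argument: an isometric multiplier has boundary symbol which is an isometry almost everywhere on $\mathbb{T}^n$, and one good boundary point gives $\dim(\clf)\leq\dim(\cle)$. Two remarks on the points you yourself flag as delicate. First, you can avoid constructing the operator-valued boundary function $\Phi(\zeta)$ altogether: the isometry identity, tested on monomials, only involves inner products of the \emph{columns} $\Phi\eta,\Phi\eta'\in H^2_{\cle}(\D^n)$, whose a.e.\ radial boundary values exist by Rudin's theorem for $H^2(\D^n)$ (applied coordinatewise); your Fourier argument then shows that for $\eta,\eta'$ in a countable dense $\mathbb{Q}[i]$-linear subspace of $\clf$ one has $\langle(\Phi\eta)(\zeta),(\Phi\eta')(\zeta)\rangle_{\cle}=\langle\eta,\eta'\rangle_{\clf}$ a.e., so for a.e.\ fixed $\zeta$ the map $\eta\mapsto(\Phi\eta)(\zeta)$ extends to an isometry $\clf\to\cle$. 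This sidesteps the one genuinely technical issue (a.e.\ operator-valued boundary limits on the polydisc) while keeping your argument intact. Second, separability: the lemma as stated assumes none, and your countable-dense-set step needs it; but the general case reduces to yours, since for any separable closed subspace $\clf_0\subseteq\clf$ the restriction gives an isometric multiplier $H^2_{\clf_0}(\D^n)\to H^2_{\cle_0}(\D^n)$ with $\cle_0\subseteq\cle$ separable (the closed span of the Taylor coefficients of countably many columns), whence $\dim(\clf_0)\leq\dim(\cle)$ for every such $\clf_0$, forcing $\dim(\clf)\leq\dim(\cle)$. Finally, it is worth noting why your route through the boundary is essentially forced: interior evaluation fails (for $\Phi(\z)=z_1$ the multiplier is isometric but $\Phi(0)=0$), and a wandering-subspace count also fails for $n\geq 2$, since invariant subspaces of $H^2(\D^n)$ can have rank exceeding $\dim(\cle)$ by Rudin's examples; so the a.e.\ boundary isometry is the right mechanism, and your write-up identifies correctly where the real work lies.
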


The following result is required to prove our main results.
The idea of the proof comes from a paper of Douglas and Foias \cite{DF-UNIQUE}.
This result may be known to the experts but here our proof is 
direct and it is useful as an independent result.

\begin{lem}\label{lemma-extension1}
Let $\clq \subseteq H^2_{\cle}(\D^n)$ be an $M_{z_j}^{*}$-invariant subspace 
for all $j=1, 2, \cdots,n$ and $T: \clq \rightarrow H^2(\D^n)$
be an isometry such that $TM_{z_j}^{*}|_{\clq} = M_{z_j}^{*}T$ for all $j=1, 2, \cdots,n$. 
Then there exists a one dimensional Hilbert space $\tilde{\cle}$, a subspace of $\cle$
such that $\clq \subseteq H^2_{\tilde{\cle}}(\D^n)$ and a constant unitary multiplier
$M_{\Theta}: H^2_{\tilde{\cle}}(\D^n) \rightarrow  H^2(\D^n)$ 
such that $T = M_{\Theta}|_{\clq}$. 
\end{lem}

\begin{proof}
	Since $\clq $ is an $M_{z_j}^{*}$-invariant subspace for all $j=1, 2, \cdots,n$, the space
	\[
	\clm = \overline{span}\{ \bm{z}^{\bm{k}} \clq: {\bm{k}} \in \mathbb{Z}_{+}^n \} 
	\]
	is $M_{z_j}$-reducing subspace of $H^2_{\cle}(\D^n)$ for all $j=1, 2, \cdots,n$. We know that
	any $(M_{z_1},M_{z_2}, \ldots, M_{z_n})$ reducing subspace of $H^2_{\cle}(\D^n)$ is of the form $H^2_{\tilde{\cle}}(\D^n)$ 
	for some closed subspace ${\tilde{\cle}}$ of $\cle$. Thus $\clm = H^2_{\tilde{\cle}}(\D^n)$.
	Firstly, our aim is to extend the operator $T: \clq \rightarrow H^2(\D^n)$ to 
	$\tilde{T}: H^2_{\tilde{\cle}}(\D^n) \rightarrow H^2(\D^n)$	such that $\tilde{T}$ is an isometry
	and $M_{z_j}^{*} \tilde{T} = \tilde{T} M_{z_j}^{*}$ for all $j=1, 2, \cdots,n$.

	It is easy to see that the set $\{ \bm{z}^{\bm{k}} \eta: {\bm{k}} \in \mathbb{Z}_{+}^n, \eta \in \clq \} $ is a total set in $H^2_{\tilde{\cle}}(\D^n)$. 
	Consider
	\[
\cll = \mbox{span}\{ \bm{z}^{\bm{k}} \eta: {\bm{k}} \in \mathbb{Z}_{+}^n, \eta \in \clq \}.
	\]
%%%%%%%%%%%%%%%%%%%%%%%%%%%%%%%%%%%%%%%%%%%%%%%%%%%%%%
%%%%%%%%%%%%%%%%%%%%%%%%%%%%%%%%%%%%%%%%%%%%%%%%%%%%%%%%%%	
	Let $F$ be a finite set in $\mathbb{Z}^n_{+}$. We define $\tilde{T} $ on $\cll$ as  
\[	
	\tilde{T} ( \sum_{\bm{k} \in F} \alpha_{\bm{k}} \bm{z}^{\bm{k}} \eta_{\bm{k}} ) 
	= \sum_{\bm{k} \in F} \alpha_{\bm{k}} \bm{z}^{\bm{k}} T \eta_{\bm{k}}, 
\]
where $\alpha_{\bm{k}} \in \mathbb{C}$ and $\eta_{\bm{k}} \in \clq$.  
	We shall firstly show that the map $\tilde{T}$ is well defined on $\cll$. To do that we consider
	the following. 
	
	Let $F_1, F_2$ be two finite sets in $\mathbb{Z}^n_{+}$, and 
	$\eta_{\bm{k}}, \zeta_{\bm{m}} \in \clq$. Then
\begin{align}\label{mainlemma_equa1}
	 < \sum_{{\bm{k}} \in F_1} \alpha_{\bm{k}} \bm{z}^{\bm{k}} T\eta_{\bm{k}}, \sum_{{\bm{m}} \in F_2} \beta_{\bm{m}} \bm{z}^{\bm{m}} T \zeta_{\bm{m}} > 	
	& = \sum_{{\bm{k}} \in F_1} \sum_{{\bm{m}} \in F_2} \alpha_{\bm{k}} \bar{ \beta_{\bm{m}}}  <  \bm{z}^{\bm{k}} T\eta_{\bm{k}}, \bm{z}^{\bm{m}} T \zeta_{\bm{m}} >. 
\end{align}	
Now for each fixed ${\bm{k}} \in F_1$ and ${\bm{m}} \in F_2$,
let ${\bm{k}}=(k_1, \ldots, k_n)$ and ${\bm{m}}=(m_1, \ldots, m_n)$. 
Consider $I = \{ j \in \{ 1, \cdots, n\}: k_j \geq m_j \}$.
Using the fact that $(M_{z_1}, \ldots,  M_{z_n})$ is a
tuple of doubly commuting isometries and $T$ is an isometry, we obtain
\begin{align*}
<  \bm{z}^{\bm{k}} T\eta_{\bm{k}}, \bm{z}^{\bm{m}} T \zeta_{\bm{m}} > 
&  = <\prod_{j=1}^n M_{z_j}^{k_j} T\eta_{\bm{k}}, \prod_{j=1}^n M_{z_j}^{m_j}T \zeta_{\bm{m}} > \\
& = <\prod_{j \in I} M_{z_j}^{k_j-m_j} T\eta_{\bm{k}}, \prod_{j \in I^c} M_{z_j}^{m_j-k_j} T \zeta_{\bm{m}}> \\
&= <\prod_{j \in I^c} M_{z_j}^{*(m_j-k_j)} T\eta_{\bm{k}}, \prod_{j \in I} M_{z_j}^{*(k_j-m_j)} T \zeta_{\bm{m}}> \\
& = <T\prod_{j \in I^c} M_{z_j}^{*(m_j-k_j)} \eta_{\bm{k}}, T\prod_{j \in I} M_{z_j}^{*(k_j-m_j)} \zeta_{\bm{m}}> \\
& = <\prod_{j \in I^c} M_{z_j}^{*(m_j-k_j)} \eta_{\bm{k}}, \prod_{j \in I} M_{z_j}^{*(k_j-m_j)} \zeta_{\bm{m}}> \\
& = < \prod_{j \in I} M_{z_j}^{(k_j-m_j)} \eta_{\bm{k}}, \prod_{j \in I^c} M_{z_j}^{(m_j-k_j)}\zeta_{\bm{m}}> 
\\
& = < \prod_{j \in I} {z_j}^{m_j} \prod_{j \in I^c} {z_j}^{k_j}(\prod_{j \in I} {z_j}^{(k_j -m_j)})\eta_{\bm{k}}, \prod_{j \in I} {z_j}^{m_j} \prod_{j \in I^c} {z_j}^{k_j}(\prod_{j \in I} {z_j}^{(m_j -k_j)}) \zeta_{\bm{m}}>\\
&= < \bm{z}^{\bm{k}} \eta_{\bm{k}}, \bm{z}^{\bm{m}} \zeta_{\bm{m}} >.
	\end{align*}
Thus from the above equation (\ref{mainlemma_equa1}), we have
\begin{align*}
	 < \sum_{{\bm{k}} \in F_1} \alpha_{\bm{k}} \bm{z}^{\bm{k}} T\eta_{\bm{k}}, \sum_{{\bm{m}} \in F_2} \beta_{\bm{m}} \bm{z}^{\bm{m}} T \zeta_{\bm{m}} > 
&=  < \sum_{{\bm{k}} \in F_1} \alpha_{\bm{k}}  \bm{z}^{\bm{k}} \eta_{\bm{k}},  \sum_{{\bm{m}} \in F_2}  { \beta_{\bm{m}}} \bm{z}^{\bm{m}} \zeta_{\bm{m}} >.
	\end{align*}
In particular for any finite set $F$ in $\mathbb{Z}^n_{+}$, we have
\begin{align}\label{mainlemma_equa2}
	\| \sum_{{\bm{k}} \in F} \alpha_{\bm{k}} \bm{z}^{\bm{k}} T \eta_{\bm{k}} \|^2 
	& = \| \sum_{{\bm{k}} \in F} \alpha_{\bm{k}} \bm{z}^{\bm{k}} \eta_{\bm{k}} \|^2. 
\end{align}
Suppose that
\[
\sum_{{\bm{k}} \in F_1} \alpha_{\bm{k}} \bm{z}^{\bm{k}} \eta_{\bm{k}} =  \sum_{{\bm{m}} \in F_2} \beta_{\bm{m}} \bm{z}^{\bm{m}} \zeta_{\bm{m}}. 
\]	
Then from the above equality (\ref{mainlemma_equa2}), it is easy to see that
\[
	\|  \sum_{{\bm{k}} \in F_1} \alpha_{\bm{k}} \bm{z}^{\bm{k}} T \eta_{\bm{k}} -  \sum_{{\bm{m}} \in F_2} \beta_{\bm{m}} \bm{z}^{\bm{m}} T \zeta_{\bm{m}} \|^2 = \| \sum_{{\bm{k}} \in F_1} \alpha_{\bm{k}} \bm{z}^{\bm{k}} \eta_{\bm{k}} -  \sum_{{\bm{m}} \in F_2} \beta_{\bm{m}} \bm{z}^{\bm{m}} \zeta_{\bm{m}} \|^2 =0. 
\]	
Therefore
\[
\sum_{{\bm{k}} \in F_1} \alpha_{\bm{k}} \bm{z}^{\bm{k}} T \eta_{\bm{k}} =  \sum_{{\bm{m}} \in F_2} \beta_{\bm{m}} \bm{z}^{\bm{m}} T \zeta_{\bm{m}}.
\]
Hence from the definition of $\tilde{T}$, we have	
\[
\tilde{T} (\sum_{{\bm{k}} \in F_1} \alpha_{\bm{k}} \bm{z}^{\bm{k}} \eta_{\bm{k}} )  
= \tilde{T} (\sum_{{\bm{m}} \in F_2} \beta_{\bm{m}} \bm{z}^{\bm{m}} \zeta_{\bm{m}}). 
\]	
This proves that $\tilde{T}$ is well defined on $\cll$. From equation (\ref{mainlemma_equa2}),
we can conclude that $\tilde{T}$ is
a bounded as well as norm preserving linear operator on $\cll$.

	Now ${\bm{k}} =(k_1, \ldots, k_n) \in \mathbb{Z}_{+}^n$, 
	$\eta \in \clq$, and for each $j=1, 2, \cdots, n$,  we have
	\begin{align*}
	\tilde{T}M_{z_j}( \bm{z}^{\bm{k}} \eta) = \tilde{T}(z_1^{k_1}\cdots z_j^{k_j+1} \cdots z_n^{k_n}  \eta) = (z_1^{k_1}\cdots z_j^{k_j+1} \cdots z_n^{k_n}) T\eta =  z_j\bm{z}^{\bm{k}} T\eta = M_{z_j}\tilde{T}(\bm{z}^{\bm{k}} \eta).
	\end{align*}
	Also if $k_j \geq 1$, then
	\begin{align*}
	\tilde{T}M_{z_j}^{*}( \bm{z}^{\bm{k}} \eta) = \tilde{T}(z_1^{k_1} \cdots z_j^{k_j-1} \cdots z_n^{k_n}  \eta) = (z_1^{k_1}\cdots z_j^{k_j-1} \cdots z_n^{k_n}) T\eta =  M_{z_j}^{*}(\bm{z}^{\bm{k}} T\eta) = M_{z_j}^{*} \tilde{T}(\bm{z}^{\bm{k}} \eta).
	\end{align*}
	If $k_j = 0$, then
	\begin{align*}
	\tilde{T}M_{z_j}^{*}( \bm{z}^{\bm{k}} \eta) = \tilde{T}(z_1^{k_1} \cdots z_{j-1}^{k_{j-1}}z_{j+1}^{k_{j+1}} \cdots z_n^{k_n}  M_{z_j}^{*} \eta) & = z_1^{k_1} \cdots z_{j-1}^{k_{j-1}}z_{j+1}^{k_{j+1}} \cdots z_n^{k_n} 
	(T M_{z_j}^{*} \eta)  \\
	& = z_1^{k_1} \cdots z_{j-1}^{k_{j-1}}z_{j+1}^{k_{j+1}} \cdots z_n^{k_n} 
	( M_{z_j}^{*}T \eta) \\
	%& = z_1^{k_1} \cdots z_{j-1}^{k_{j-1}}z_{j+1}^{k_{j+1}} \cdots z_n^{k_n} 
	%( M_{z_j}^{*}T \eta) \\
	& =  M_{z_j}^{*}(\bm{z}^{\bm{k}} T\eta) \\
	& = M_{z_j}^{*} \tilde{T}(\bm{z}^{\bm{k}} \eta).
	\end{align*}
	The above fact implies that 
\[	
	 \tilde{T} M_{z_j} = M_{z_j}\tilde{T}  \qquad \mbox{and} \qquad \tilde{T} M_{z_j}^{*}|_{\cll} = M_{z_j}^{*}\tilde{T}
\]	 
   for all $j=1, 2, \cdots, n$. 
	Now the norm preserving operator $\tilde{T}$ on $\cll$ can be extended uniquely by continuity (again denoted by same $\tilde{T}$ )
	to the closure of $\cll$ ( i.e. $\bar{\cll}= H^2_{\tilde{\cle}}(\D^n)$ ) such that
	\[	
	 \tilde{T} M_{z_j} = M_{z_j}\tilde{T}  \qquad \mbox{and} \qquad \tilde{T} M_{z_j}^{*} = M_{z_j}^{*}\tilde{T}
	\]	 
	for all $j=1, 2, \cdots, n$. This 
	imlplies that $\tilde{T}$ is a constant isometric multiplier
	from $H^2_{\tilde{\cle}}(\D^n)$ to $H^2(\D^n)$. Therefore from Lemma \ref{Mul-lemma1}, we have
	$\dim(\tilde{\cle})=1$.  
	Hence $\tilde{T} = M_{\Theta }$, where $\Theta({\bm{z}})  = \Theta(0)$ is
	a unitary from $\tilde{\cle}$ to $\mathbb{C}$ for ${\bm{z}} \in \D^n$.	
	Also
	\[
	T = \tilde{T}|_{\clq}=  M_{\Theta }|_{\clq}.
	\]
	This finishes the proof.
\end{proof}

\begin{thm}
Let $k \in \{1, 2, \ldots, n-1 \}$ be a fixed integer.
Let $\cls = \Theta H^2(\D^k) \otimes \clq_{\theta_1} \otimes \cdots \otimes \clq_{\theta{n-k}}$ 
be a joint $(M_{z_1}, \ldots, M_{z_k}, M_{z_{k+1}}^*, \ldots, M_{z_{n}}^*)$ 
invariant subspace of $ H^2 (\D^n)$, where $\Theta \in H^{\infty}(\D^k)$ is some inner function 
and $\clq_{\theta_j}$ is either a Jordan block or the Hardy space $H^2(\D)$. Let
\begin{align*}
V_j & =P_\cls M_{z_j}  |_{\cls} ~~~ \mbox{for~} 1 \leq j \leq n.
\end{align*}
Then $(V_1, \cdots, V_n)$ is a doubly commuting tuple, where $V_j$ is a pure 
isometry for $1 \leq j \leq k$ and $V_j$ is a pure contraction for $k+1 \leq j \leq n$.
\end{thm}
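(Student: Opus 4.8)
The plan is to exploit the tensor-product structure throughout. First I would identify $H^2(\D^n)$ with $H^2(\D^k) \otimes H^2(\D) \otimes \cdots \otimes H^2(\D)$ ($n-k$ disc factors) via the canonical unitary, under which $\cls$ becomes $\clm_0 \otimes \clm_1 \otimes \cdots \otimes \clm_{n-k}$ with $\clm_0 = \Theta H^2(\D^k)$ and $\clm_i = \clq_{\theta_i}$ for $1 \le i \le n-k$. In this picture $M_{z_j}$ for $1 \le j \le k$ is $M_{z_j}^{(k)} \otimes I \otimes \cdots \otimes I$ (multiplication on the first factor), while for $k+1 \le j \le n$ it is $I \otimes \cdots \otimes M_z \otimes \cdots \otimes I$ acting on the $(j-k)$-th disc factor. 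Since $P_\cls = P_{\clm_0} \otimes P_{\clm_1} \otimes \cdots \otimes P_{\clm_{n-k}}$, each $V_j = P_\cls M_{z_j}|_\cls$ factors as a single-leg operator: for $1 \le j \le k$ it is $A_j \otimes I \otimes \cdots \otimes I$ with $A_j = P_{\clm_0} M_{z_j}^{(k)}|_{\clm_0}$, and for $k+1 \le j \le n$ it is $I \otimes \cdots \otimes C_{\theta_{j-k}} \otimes \cdots \otimes I$ with $C_{\theta_i} = P_{\clq_{\theta_i}} M_z|_{\clq_{\theta_i}}$.

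Next I would identify the nature of each leg operator. Because $\Theta$ is inner, $\clm_0 = \Theta H^2(\D^k)$ is $M_{z_j}^{(k)}$-invariant, so $A_j = M_{z_j}^{(k)}|_{\clm_0}$ (no projection needed), and the unitary $W \colon H^2(\D^k) \to \clm_0$, $Wf = \Theta f$, intertwines $M_{z_j}^{(k)}$ with $A_j$. Hence $(A_1, \ldots, A_k)$ is unitarily equivalent to the doubly commuting shift tuple $(M_{z_1}^{(k)}, \ldots, M_{z_k}^{(k)})$ on $H^2(\D^k)$, and each $A_j$ is a pure isometry. For $k+1 \le j \le n$, since $\clq_{\theta_i}$ is $M_z^*$-invariant, $C_{\theta_i}^* = M_z^*|_{\clq_{\theta_i}}$; as $M_z^{*m} \to 0$ strongly on $H^2(\D)$, its restriction satisfies $C_{\theta_i}^{*m} \to 0$ strongly, so $C_{\theta_i}$ is a pure contraction (and when $\clq_{\theta_i} = H^2(\D)$ it is simply the shift, still a pure contraction).

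I would then lift purity through the tensor product: if $S$ is a pure isometry then $S \otimes I$ is again a pure isometry, since $(S \otimes I)^{*m} = S^{*m} \otimes I \to 0$ strongly, and likewise $C \otimes I$ (or $I \otimes C$) is a pure contraction whenever $C$ is. This yields that $V_j$ is a pure isometry for $1 \le j \le k$ and a pure contraction for $k+1 \le j \le n$. For the doubly commuting relations I would separate two cases. When $i, j$ both lie in $\{1, \ldots, k\}$, the operators $V_i, V_j$ are supported on the same first leg and equal $A_i \otimes I \otimes \cdots$ and $A_j \otimes I \otimes \cdots$, so their commutation and double commutation follow from that of $(A_1, \ldots, A_k)$, i.e.\ from that of the polydisc shift tuple recalled in Section~2. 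In every remaining case $V_i$ and $V_j$ are supported on disjoint tensor legs, whence $V_iV_j = V_jV_i$ and $V_i^*V_j = V_jV_i^*$ hold at once by computing on elementary tensors.

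The only genuinely delicate points are organizational: verifying that the compression $P_\cls M_{z_j}|_\cls$ really factors leg by leg (which rests on $P_\cls$ being the tensor product of the coordinate projections together with $M_{z_j}$ being a single-leg operator), and carefully distinguishing the \emph{restriction} in the analytic legs $1 \le j \le k$, where invariance removes the projection, from the genuine \emph{compression} in the model-space legs. Once this tensor factorization is in place, purity and double commutativity reduce to the one-variable facts above, so I do not anticipate a substantive obstacle beyond this bookkeeping.
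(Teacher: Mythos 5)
Your proposal is correct, and it rests on the same underlying idea as the paper's proof --- exploiting the tensor-product form of $\cls$ --- but your execution is more uniform and actually proves several things the paper merely asserts or quotes. The paper treats the commutation relations case by case with different tools: for $i,j \le k$ it invokes the known fact that the restrictions of $(M_{z_1},\ldots,M_{z_k})$ to $\Theta H^2(\D^k)$ doubly commute; for $i,j > k$ it invokes the corresponding fact for $\clq_{\theta_1} \otimes \cdots \otimes \clq_{\theta_{n-k}}$; for the mixed relation $V_iV_j^* = V_j^*V_i$ it uses a pure restriction argument ($V_i = M_{z_i}|_{\cls}$ and $V_j^* = M_{z_j}^*|_{\cls}$ because $\cls$ is invariant for both, so the identity $M_{z_i}M_{z_j}^* = M_{z_j}^*M_{z_i}$ simply restricts to $\cls$); and only for the remaining mixed relation $V_iV_j = V_jV_i$ does it carry out the elementary-tensor computation using $P_{\clh_1 \otimes \clh_2} = P_{\clh_1} \otimes P_{\clh_2}$. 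Your leg-by-leg factorization lemma --- each $V_j$ equals $A_j \otimes I \otimes \cdots \otimes I$ or $I \otimes \cdots \otimes C_{\theta_{j-k}} \otimes \cdots \otimes I$ --- subsumes all of these at once: same-leg pairs reduce via the unitary $M_\Theta$ to the coordinate shifts on $H^2(\D^k)$, and disjoint-leg pairs commute and doubly commute trivially, which in particular makes any appeal to the Jordan-block structure of the second block unnecessary for this direction. You also supply genuine proofs of the purity claims (pure isometry for $j \le k$, pure contraction in the $C_{\cdot 0}$ sense for $j > k$) via $S^{*m} \otimes I \to 0$ strongly, where the paper writes only ``clearly.'' The one feature of the paper's route worth noting is that its restriction argument for $V_iV_j^* = V_j^*V_i$ uses only the mixed invariance of $\cls$ and not its tensor factorization, so that step would survive for an arbitrary mixed invariant subspace; under the hypotheses of this theorem, however, that extra generality is not needed, and your argument is complete as it stands.
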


\begin{proof}
Suppose  $\cls = \Theta H^2(\D^k) \otimes \clq_{\theta_1} \otimes \cdots \otimes \clq_{\theta{n-k}} \subseteq H^2 (\D^n)$ 
is a joint $(M_{z_1}, \ldots, M_{z_k}, M_{z_{k+1}}^*, \ldots, M_{z_{n}}^*)$ 
invariant subspace, where $\Theta \in H^{\infty}(\D^k)$ is some inner function 
and $\clq_{\theta_j}$ is either a Jordan block or the Hardy space $H^2(\D)$. Clearly,
$V_j  =P_\cls M_{z_j}  |_{\cls} = M_{z_j}  |_{\cls}$ for $1 \leq j \leq k$ 
is a pure isometry and $V_j  =P_\cls  M_{z_j}|_{\cls}$ for $ k+1 \leq j \leq n$
is a pure contraction.

Now $\Theta H^2(\D^k)$ is a doubly commuting invariant subspace for $(M_{z_1}, \ldots, M_{z_k})$. 
Therefore $V_iV_j = V_jV_i$ and $V_iV_j^* = V_j^*V_i$ for $1 \leq i < j \leq k$.
Again $\clq_{\theta_1} \otimes \cdots \otimes \clq_{\theta{n-k}}$ is a
doubly commuting invariant subspace for $( M_{z_{k+1}}^*, \ldots, M_{z_{n}}^*)$.
Thus $V_iV_j = V_jV_i$ and $V_iV_j^* = V_j^*V_i$
for $k+1 \leq i< j \leq n$.
Also
\[
V_i V_j^* = M_{z_i}M_{z_j}^*|_{\cls} = M_{z_j}^*M_{z_i}|_{\cls} =  V_j^*V_i
\]
for $1 \leq i \leq k$ and for $k+1 \leq  j \leq n$. 

To prove $V_iV_j =V_j V_i$ for $1 \leq i \leq k$ and for $k+1 \leq  j \leq n$,
we use the standard result of the tensor product of Hilbert spaces: 
Let $\clh_1, \clh_2 \subseteq \clh$ and $h_1, h_2 \in \clh$. 
Then
\[
P_{\clh_1 \otimes \clh_2} (h_1 \otimes h_2) = P_{\clh_1}h_1 \otimes P_{\clh_2}h_2.
\] 

Let $f \in \Theta H^2(\D^k)$ and $g_l \in \clq_{\theta_l}$ for $l=1, \ldots, n-k$. Then 
\begin{align*}
& V_iV_{k+ l}(f \otimes g_1 \otimes \cdots \otimes g_l \otimes \cdots \otimes g_{n-k})\\ 
& = V_i P_{\Theta H^2(\D^k) \otimes \clq_{\theta_1} \otimes \cdots \otimes \clq_{\theta{n-k}}} (f \otimes g_1 \otimes \cdots \otimes M_z g_l\otimes \cdots \otimes g_{n-k})\\
& = V_i(f \otimes P_{\clq_{\theta_1} \otimes \cdots \otimes \clq_{\theta{n-k}}}(g_1 \otimes \cdots \otimes zg_l\otimes \cdots \otimes g_{n-k}))\\
& = M_{z_i}(f \otimes g_1 \otimes \cdots \otimes P_{\clq_{\theta_l}} z g_l\otimes \cdots \otimes g_{n-k}) \\
& = z_i f \otimes g_1 \otimes \cdots \otimes P_{\clq_{\theta_l}} z g_l \otimes \cdots \otimes g_{n-k} \\
& = P_{\Theta H^2(\D^k) \otimes \clq_{\theta_1} \otimes \cdots \otimes \clq_{\theta{n-k}}} (z_i f \otimes g_1 \otimes \cdots \otimes z g_l\otimes \cdots \otimes g_{n-k}) \\
& = V_{k+l} V_i(f \otimes g_1 \otimes \cdots \otimes g_l \otimes \cdots \otimes g_{n-k}).
\end{align*}

This completes the proof.
\end{proof}

We record the following known result on the analytic model 
for a doubly commuting $n$-tuple of operators (see \cite{BNJ}).
Let $T=(T_1, \ldots, T_n)$ be a doubly commuting pure $n$-tuple
of operators on $\clh$. Define the defect operator for the $n$-tuple $T$ as
\[
D_{T^*} = (\prod_{j=1}^n (I_{\clh} - T_j T_j^{*}))^{1 \over 2}.
\]
and 
\[
\cld_{T^*} = \overline{ran}(D_{T^*}).
\]

\begin{thm} \label{Theorem-dc-model}
Let $T=(T_1, \ldots, T_n)$ be a doubly commuting pure $n$-tuple
of operators on $\clh$. Then $(M_{z_1}, \ldots, M_{z_n})$ on
$H^2_{\cld_{T^*}}(\D^n) $ is the minimal isometric dilation of $T$.
That is there exists a joint $(M_{z_1}^*, \ldots, M_{z_n}^*)$
invariant subspace $\clq$ of $H^2_{\cld_{T^*}}(\D^n) $ such that
\[
T_j {\cong} P_{\clq}M_{z_j}\mid_{\clq} \quad (\mbox{for all~} j =1, \ldots, n)
\] 
and 
\[
H^2_{\cld_{T^*}}(\D^n) = \overline{span} \{ \bm{z}^{\bm{k}}\clq : \bm{k} \in \mathbb{Z}_{+}^n \}.
\] 

\end{thm}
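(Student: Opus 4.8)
The plan is to build the canonical dilation map explicitly and then read off the three assertions (coextension, intertwining, minimality) directly. First I would define $\Pi : \clh \raro H^2_{\cld_{T^*}}(\D^n)$ by
\[
\Pi h = \sum_{\bm{k} \in \mathbb{Z}_+^n} \bm{z}^{\bm{k}}\, D_{T^*} T^{*\bm{k}} h, \qquad \text{equivalently} \qquad (\Pi h)(\bm{z}) = D_{T^*}(I_{\clh} - z_1 T_1^*)^{-1} \cdots (I_{\clh} - z_n T_n^*)^{-1} h,
\]
where $T^{*\bm{k}} = T_1^{*k_1} \cdots T_n^{*k_n}$. This is the several-variable analogue of the Sz.-Nagy--Foias dilation map, and its resolvent shape echoes the formula obtained in Lemma \ref{thm-commutator}.

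The main step, and the place where both hypotheses are genuinely used, is to show that $\Pi$ is an isometry. I would compute $\|\Pi h\|^2 = \sum_{\bm{k}} \|D_{T^*} T^{*\bm{k}} h\|^2 = \langle \sum_{\bm{k}} T^{\bm{k}} \prod_{j=1}^n (I_{\clh} - T_j T_j^*) T^{*\bm{k}} h, h \rangle$. Because the tuple is doubly commuting, the adjoints also commute, so each summand factors as $\prod_{j=1}^n T_j^{k_j}(I_{\clh} - T_j T_j^*) T_j^{*k_j}$; summing each index $k_j$ from $0$ to $N$ then telescopes (in each variable $\sum_{k=0}^N T_j^k(I_{\clh}-T_jT_j^*)T_j^{*k} = I_{\clh} - T_j^{N+1}T_j^{*(N+1)}$) to give $\prod_{j=1}^n (I_{\clh} - T_j^{N+1} T_j^{*(N+1)})$. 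Purity of the tuple forces this product to converge strongly to $I_{\clh}$ as $N \to \infty$, whence $\|\Pi h\|^2 = \|h\|^2$. The careful bookkeeping behind the factorization (repeatedly commuting $T_i^*$ past $T_j$) together with the justification of the limit interchange is the part I expect to be the main obstacle.

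Next I would verify the intertwining relation $M_{z_j}^* \Pi = \Pi T_j^*$ for each $j$, which is immediate on comparing the coefficient of $\bm{z}^{\bm{k}}$ on both sides (both equal $D_{T^*} T^{*\bm{k}} T_j^* h$). Setting $\clq = \overline{ran}(\Pi) = \Pi \clh$, which is closed since $\Pi$ is isometric, this relation shows at once that $\clq$ is joint $(M_{z_1}^*, \ldots, M_{z_n}^*)$-invariant; taking adjoints gives $\Pi^* M_{z_j} = T_j \Pi^*$, and composing with $\Pi$ and using $\Pi^* \Pi = I_{\clh}$ yields $\Pi^* M_{z_j} \Pi = T_j$, that is, $T_j \cong P_{\clq} M_{z_j}|_{\clq}$ under the unitary $\Pi : \clh \to \clq$.

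Finally, for minimality I would put $\clm = \overline{span}\{\bm{z}^{\bm{k}} \clq : \bm{k} \in \mathbb{Z}_+^n\}$ and show $\clm = H^2_{\cld_{T^*}}(\D^n)$. The key point is that the projection of $H^2_{\cld_{T^*}}(\D^n)$ onto its wandering (constant) subspace $\cld_{T^*}$ is $\prod_{j=1}^n (I - M_{z_j} M_{z_j}^*)$, and applied to $\Pi h$ it returns precisely the constant term $D_{T^*} h$. Using the intertwining relation together with the doubly commuting relations among the $M_{z_j}$, one checks inductively that $\prod_{j=1}^n (I - M_{z_j} M_{z_j}^*)\Pi h$ remains inside the $M_{z_j}$-invariant space $\clm$; hence $D_{T^*} h \in \clm$ for every $h \in \clh$, so $\cld_{T^*} = \overline{ran}(D_{T^*}) \subseteq \clm$. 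Since $\clm$ is invariant under each $M_{z_j}$ and contains the wandering subspace $\cld_{T^*}$, it exhausts $H^2_{\cld_{T^*}}(\D^n)$, giving the asserted minimality and completing the identification of $(M_{z_1}, \ldots, M_{z_n})$ as the minimal isometric dilation of $T$.
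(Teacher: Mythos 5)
Your proof is correct. Note that the paper itself gives no proof of this statement: it is recorded as a known result, with a citation to Bhattacharyya--Narayanan--Sarkar (\cite{BNJ}), so there is no in-paper argument to compare against. What you have written is essentially the standard argument from that reference --- the canonical dilation map $\Pi h = \sum_{\bm{k}} \bm{z}^{\bm{k}} D_{T^*} T^{*\bm{k}} h$, the isometry computation via double commutativity, factorization, telescoping and purity, the intertwining $M_{z_j}^*\Pi = \Pi T_j^*$ giving the coextension on $\clq = \Pi\clh$, and minimality via the wandering-subspace projection $\prod_{j=1}^n(I - M_{z_j}M_{z_j}^*)$ together with the observation that $\overline{span}\{\bm{z}^{\bm{k}}\clq\}$ is reducing for each $M_{z_j}$ --- so you have, in effect, supplied the proof the paper delegates to the literature.
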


Using the above theorem, we characterize a doubly commuting tuple 
of shifts and pure contractions.

\begin{lem}\label{lemma-dc}
Let $k \in \{1, 2, \ldots, n-1 \}$ be a fixed integer.
Let $(T_1, \ldots, T_{k}, T_{{k+1}}, \ldots, T_n)$ be an $n$-tuple of doubly commuting
operators on $\clh$ such that $T_j$ is shift for $1 \leq j \leq k$ and $T_j$ is
a pure contraction for $k+1 \leq j \leq n$. Then $(T_1, \ldots, T_{k}, T_{k+1}, \ldots, T_n)$
on $\clh$ is unitarily eqivalent to 
$(M_{z_1}, \ldots, M_{z_k}, I_{H^2(\D^k)} \otimes \tau_1, \ldots, I_{H^2(\D^k)} \otimes \tau_{n-k}) $ 
on $H^2_{\clw}(\D^k)$, 
where $\clw = \cap_{j=1}^{k}Ker(T_j^{*})$ and $\tau_j$ is a pure contraction on $\clw$
for $1  \leq j \leq n-k$.
\end{lem}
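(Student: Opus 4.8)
The plan is to produce the unitary onto $H^2_{\clw}(\D^k)$ from the doubly commuting shift subtuple $(T_1, \ldots, T_k)$, and then to read off that each remaining pure contraction $T_{k+j}$ is carried to a constant multiplier of the form $I_{H^2(\D^k)} \otimes \tau_j$. First I would note that $(T_1, \ldots, T_k)$ is itself a $k$-tuple of doubly commuting shifts: a subtuple of a doubly commuting tuple is again doubly commuting, and each $T_j$ for $1 \leq j \leq k$ is a shift by hypothesis. Hence the decomposition recalled earlier (see \cite{DAN-P}) applies and yields $\clh = \oplus_{\bm{k} \in \mathbb{Z}_+^k} T^{\bm{k}} \clw$ with $\clw = \cap_{j=1}^k Ker(T_j^*)$, together with the unitary $\Pi : \clh \to H^2_{\clw}(\D^k)$, $\Pi(T_1^{k_1} \cdots T_k^{k_k} \eta) = z_1^{k_1} \cdots z_k^{k_k} \eta$, satisfying $\Pi T_j = M_{z_j} \Pi$ for $1 \leq j \leq k$. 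This already handles the first $k$ entries of the tuple.

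For the remaining entries I would exploit the full strength of double commutativity with the shift subtuple. Fix $1 \leq j \leq n-k$. From $T_{k+j} T_i = T_i T_{k+j}$ and $T_{k+j}^* T_i = T_i T_{k+j}^*$ (for $1 \leq i \leq k$), taking adjoints shows that both $T_{k+j}$ and $T_{k+j}^*$ commute with every $T_i^*$. Consequently, for $\eta \in \clw$ we get $T_i^* (T_{k+j} \eta) = T_{k+j} T_i^* \eta = 0$ and $T_i^* (T_{k+j}^* \eta) = T_{k+j}^* T_i^* \eta = 0$ for all $i$, so $T_{k+j} \eta, T_{k+j}^* \eta \in \clw$; that is, $\clw$ reduces $T_{k+j}$. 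I then set $\tau_j := T_{k+j}|_{\clw} \in \clb(\clw)$. Using commutativity $T_{k+j} T^{\bm{k}} = T^{\bm{k}} T_{k+j}$ together with $\tau_j \eta \in \clw$, a one-line computation on the generating vectors gives $\Pi T_{k+j}(T^{\bm{k}} \eta) = \bm{z}^{\bm{k}} (\tau_j \eta) = (I_{H^2(\D^k)} \otimes \tau_j)\Pi(T^{\bm{k}}\eta)$, whence $\Pi T_{k+j} \Pi^* = I_{H^2(\D^k)} \otimes \tau_j$.

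It remains to check that each $\tau_j$ is a pure contraction. It is a contraction, being the restriction of the contraction $T_{k+j}$ to an invariant subspace. For purity, since $\clw$ reduces $T_{k+j}$ we have $\tau_j^{*m} = T_{k+j}^{*m}|_{\clw}$; as $T_{k+j}$ is pure, $T_{k+j}^{*m} \to 0$ in the strong operator topology on $\clh$, hence on $\clw$, so $\tau_j^{*m} \to 0$ strongly and $\tau_j$ is pure. The one step that really carries the argument is the reducibility of $\clw$ under each $T_{k+j}$: this is precisely where the word \emph{doubly} in \emph{doubly commuting} is indispensable, and it is what forces the clean tensor form $I_{H^2(\D^k)} \otimes \tau_j$; the remaining verifications are routine bookkeeping on the decomposition $\clh = \oplus_{\bm{k}} T^{\bm{k}} \clw$.
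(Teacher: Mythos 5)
Your proof is correct and follows essentially the same route as the paper: the same Wold-type decomposition of $\clh = \oplus_{\bm{k}} T^{\bm{k}}\clw$ and the same unitary $\Pi$ handle the shift part, and double commutativity forces each remaining operator into the constant-multiplier form $I_{H^2(\D^k)} \otimes \tau_j$. The only difference is that where the paper simply invokes the known commutant fact that any (pure) operator commuting with all $M_{z_i}$ and $M_{z_i}^*$ on $H^2_{\clw}(\D^k)$ equals $I_{H^2} \otimes \tau$ for a (pure) $\tau \in \clb(\clw)$, you prove that step directly by showing $\clw$ reduces each $T_{k+j}$ and checking the intertwining on generating vectors, which makes the argument self-contained.
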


\begin{proof}
Consider the map $\Pi : \clh \rightarrow H^2_{\clw}(\D^k)$ defined by
\[
\Pi (\prod_{j=1}^k T_j^{m_j} \eta) = \prod_{j=1}^k z_j^{m_j} \eta,
\]
where $\eta \in \clw$ and $m_j \in \mathbb{Z}_{+}$. Then clearly $\Pi$
is unitary and $\Pi T_j = M_{z_j}\Pi$ for $1 \leq j \leq k$. Now we know that
if $C \in \clb({H^2_{\clw}(\D^k)})$ such that $M_{z_j} C = C M_{z_j}$ and $M_{z_j}^* C = C M_{z_j}^*$
for $1 \leq j \leq k$, then $C=M_{\tau} = I_{H^2(\D^k)} \otimes \tau$, where $\tau \in \clb(\clw)$. 
Moreover, if $C$ is pure then $\tau$ is pure. Thus
\[
\Pi T_{k+j}\Pi^* = I_{H^2(\D^k)} \otimes \tau_j,
\]
where $\tau_j\in \clb(\clw)$ is a pure contraction for $1 \leq j \leq n-k$.
Since $T_{k+i}T_{k+j} =T_{k+j}T_{k+i}$ and $T_{k+i}^*T_{k+j} =T_{k+j}T_{k+i}^*$,
it readily follows that
\[
\tau_i \tau_j = \tau_j \tau_i  \mbox{~~and ~~} \tau_i^* \tau_j = \tau_j \tau_i^*
\]
for $1 \leq i <j \leq n-k$.
\end{proof}

Now we are in a position to state our main result.

\begin{thm}
Let $\cls$ be a closed subspace of $H^2(\D^n)$ and $k \in \{1, \ldots, n-1 \}$ be a fixed integer. 
Let $\cls$ be a joint $(M_{z_1}, \ldots, M_{z_k}, M_{z_{k+1}}^*, \ldots, M_{z_n}^* ) $ invariant 
subspace and
\begin{align*}
V_j & = P_{{\cls}} M_{z_j} |_{\cls} \quad \mbox{for~~} j=1, \ldots, n.
\end{align*}
If $(V_1, \ldots, V_n)$ is doubly commuting, then 
$\cls= \Theta H^2(\D^k) \otimes \clq_{\theta_1} \otimes \cdots \otimes \clq_{\theta{n-k}}$
for some $\Theta \in H^{\infty}(\D^k)$ inner function 
and $\clq_{\theta_j}$ either a Jordan block or the Hardy space $H^2(\D)$ for 
$j=1, \ldots, n-k$.
\end{thm}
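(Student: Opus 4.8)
The plan is to reduce $\cls$ to a concrete tensor product in two movements: first model the compressed tuple abstractly so that the forward and backward directions decouple, and then transport the model back into the scalar space $H^2(\D^n)$ and collapse every multiplicity to one. To begin, I would record the types of the operators: $V_j=M_{z_j}|_{\cls}$ is a pure isometry for $1\le j\le k$ (the restriction of a shift to an invariant subspace), while for $k+1\le j\le n$ we have $V_j=P_\cls M_{z_j}|_\cls$ with $V_j^*=M_{z_j}^*|_\cls$, a pure contraction (because $M_{z_j}^*$ is a pure contraction). Thus $(V_1,\dots,V_n)$ is a doubly commuting pure $n$-tuple whose first $k$ entries are shifts and whose last $n-k$ entries are pure contractions, so Lemma~\ref{lemma-dc} applies and yields a unitary $\Pi:\cls\to H^2_{\clw}(\D^k)$, with $\clw=\cap_{j=1}^k \mathrm{Ker}(V_j^*)$, such that $\Pi V_j\Pi^*=M_{z_j}$ for $j\le k$ and $\Pi V_{k+i}\Pi^*=I_{H^2(\D^k)}\otimes\tau_i$, where $(\tau_1,\dots,\tau_{n-k})$ are doubly commuting pure contractions on $\clw$. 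This already decouples the forward from the backward directions at the level of the model.

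Next, since $(\tau_1,\dots,\tau_{n-k})$ is itself a doubly commuting pure tuple, I would invoke Theorem~\ref{Theorem-dc-model} to get a joint $(M_{w_1}^*,\dots,M_{w_{n-k}}^*)$-invariant subspace $\clq_0\subseteq H^2_\cld(\D^{n-k})$, with $\cld=\cld_{\tau^*}$, such that $\tau_i\cong P_{\clq_0}M_{w_i}|_{\clq_0}$ and these compressions again doubly commute; that is, $\clq_0$ is a Jordan block of the $\cld$-valued Hardy space. Combined with the previous step, $\cls$ is unitarily equivalent, through the whole mixed tuple, to $H^2(\D^k)\otimes\clq_0$ sitting inside $H^2_\cld(\D^n)=H^2(\D^k)\otimes H^2_\cld(\D^{n-k})$. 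Everything now hinges on showing $\dim\cld=1$.

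The hard part is this collapse of multiplicity, and I would handle it by realizing the minimal isometric dilation concretely. Set $\clm=\overline{\mathrm{span}}\{\,z_{k+1}^{m_1}\cdots z_n^{m_{n-k}}\cls : m_i\ge 0\,\}\subseteq H^2(\D^n)$. Because $\cls$ is $M_{z_j}$-invariant for $j\le k$ and $M_{z_{k+i}}^*$-invariant for $i\le n-k$, one checks directly that $\clm$ is invariant under every $M_{z_1},\dots,M_{z_n}$ and, moreover, reducing for $M_{z_{k+1}},\dots,M_{z_n}$; by uniqueness of the minimal dilation $\clm\cong H^2_\cld(\D^n)$ and the tuple $(M_{z_1}|_\clm,\dots,M_{z_n}|_\clm)$ is doubly commuting. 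The induced unitary $H^2_\cld(\D^n)\to\clm\hookrightarrow H^2(\D^n)$ intertwines all the shifts, hence, by the commutant description recalled above, is an isometric multiplier $M_\Phi$ with $\Phi\in H^\infty_{\clb(\cld,\C)}(\D^n)$, and then Lemma~\ref{Mul-lemma1} forces $\dim\cld\le\dim\C=1$, so $\dim\cld=1$. Consequently $\clm=\Phi H^2(\D^n)$ for a scalar inner $\Phi$, and since $\clm$ is reducing for $M_{z_{k+1}},\dots,M_{z_n}$, the projection $M_\Phi M_\Phi^*$ commutes with each $M_{z_{k+i}}$, whence $M_\Phi$ commutes with $M_{z_{k+i}}^*$ and $M_{z_{k+i}}^*\Phi=M_\Phi M_{z_{k+i}}^*1=0$; thus $\Phi=\Theta(z_1,\dots,z_k)$ depends only on the first $k$ variables. (This is precisely the Douglas--Foias type rigidity isolated in Lemma~\ref{lemma-extension1}, which may be invoked in place of the multiplier/dimension count to see that the comparison map is a constant and that no multiplicity survives.)

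Finally I would reassemble. With $\dim\cld=1$, the subspace $\clq_0\subseteq H^2(\D^{n-k})$ is a genuine scalar Jordan block whose compression tuple doubly commutes, so by Sarkar's classification \cite{JAY-JORDAN} one has $\clq_0=\clq_{\theta_1}\otimes\cdots\otimes\clq_{\theta_{n-k}}$, each $\clq_{\theta_j}$ being a one-variable Jordan block or all of $H^2(\D)$. Transporting the model back through $M_\Phi$ gives $\cls=M_\Phi\big(H^2(\D^k)\otimes\clq_0\big)=\Theta\,(H^2(\D^k)\otimes\clq_0)$, and because $\Theta$ involves only $z_1,\dots,z_k$ this equals $\Theta H^2(\D^k)\otimes\clq_{\theta_1}\otimes\cdots\otimes\clq_{\theta_{n-k}}$, as claimed. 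I expect the genuine obstacle to be the third paragraph: establishing that the single defect space $\cld$ is one-dimensional and that the resulting inner function splits off the last $n-k$ variables, i.e. rigorously decoupling the Beurling factor in the forward variables from the Jordan-block factors in the backward variables.
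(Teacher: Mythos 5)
Your first, second, and final paragraphs track the paper's own proof exactly (Lemma \ref{lemma-dc}, then Theorem \ref{Theorem-dc-model}, then Sarkar's classification \cite{JAY-JORDAN} at the end); the divergence, and the genuine gap, is in your third paragraph. The phrase ``by uniqueness of the minimal dilation $\clm\cong H^2_{\cld}(\D^n)$'' appeals to a principle that is false for commuting tuples: for $n\geq 2$, minimal isometric (Ando-type) dilations of commuting contractions are not unique, and uniqueness is only restored for minimal \emph{regular} dilations (Brehmer, Sz.-Nagy--Foias) or within a class of doubly commuting dilations. To invoke either version you must first prove that your concrete dilation $(M_{z_1}|_{\clm},\ldots,M_{z_n}|_{\clm})$ of $(V_1,\ldots,V_n)$ is regular --- equivalently, construct by hand the unitary $\Gamma: H^2_{\cld}(\D^n)\to\clm$ extending the model identification of $\cls$ and intertwining the shifts --- and that verification, which is precisely where the double commutativity of $(V_1,\ldots,V_n)$ and the mixed invariance of $\cls$ must enter, is absent. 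The companion assertion that the restricted tuple on $\clm$ ``is doubly commuting'' is likewise unproven and nearly circular: since $\clm$ reduces the last $n-k$ shifts, one has $\clm=\cle\otimes H^2(\D^{n-k})$ for some $(M_{z_1},\ldots,M_{z_k})$-invariant $\cle\subseteq H^2(\D^k)$, and double commutativity of the first $k$ restrictions is equivalent, via the Mandrekar--Sarkar theorem, to $\cle=\Theta H^2(\D^k)$ --- which is the bulk of what the theorem asserts.

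The paper collapses the multiplicity by a different mechanism that avoids dilation uniqueness altogether. After the same two reductions, it forms the isometry $\tilde{\Pi}: H^2(\D^k)\otimes\clq\to H^2(\D^k)\otimes H^2(\D^{n-k})$, notes that intertwining the first $k$ shifts makes it an operator-valued inner multiplier $M_{\Phi}$ with $\Phi\in H^{\infty}_{\clb(\clq, H^2(\D^{n-k}))}(\D^k)$ (a function of $z_1,\ldots,z_k$ only), and then uses the intertwining with the backward shifts in the remaining variables: the a.e.\ boundary values $\Phi(e^{it})$ are isometries from $\clq$ into $H^2(\D^{n-k})$ commuting with the backward shifts, so the Douglas--Foias rigidity Lemma \ref{lemma-extension1} forces $\Phi(e^{it})=\lambda_t I_{\clq}$. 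This yields at one stroke the scalar inner function $\Theta$, the identity $\cls=\Theta H^2(\D^k)\otimes\clq$, and the fact that $\clq$ lies in the scalar space $H^2(\D^{n-k})$, so that \cite{JAY-JORDAN} applies. Your parenthetical remark that Lemma \ref{lemma-extension1} ``may be invoked in place of the multiplier/dimension count'' is exactly right, but to use it one needs the multiplier in the first $k$ variables with a.e.\ isometric boundary values (the paper's $\tilde{\Pi}$), not the full $n$-variable multiplier attached to $\clm$. So either switch to that construction, or keep your $\clm$-route and supply the regularity verification (or a direct construction of $\Gamma$) that the citation of ``uniqueness'' currently papers over; as written, the central step of your third paragraph is unsupported.
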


\begin{proof} 
Since $\cls$ is
a joint $(M_{z_1}, \ldots, M_{z_k}, M_{z_{k+1}}^*, \ldots, M_{z_n}^* ) $ invariant subspace
of $H^2(\D^n)$,
\[
V_j = P_{{\cls}} M_{z_j} |_{\cls} =  M_{z_j} |_{\cls}
\]
for $j=1, \ldots, k$ is shift and 
\[
V_j=P_{{\cls}} M_{z_j} |_{\cls}
\]
for $j=k+1, \ldots, n$
is a pure contraction. Hence by the above Lemma \ref{lemma-dc},
there exists a unitary
$\Pi : \cls \rightarrow H^2_{\clw}(\D^k)$ defined by
\[
\Pi (\prod_{j=1}^k V_j^{m_j} \eta) = \prod_{j=1}^k z_j^{m_j} \eta \qquad (\eta \in \clw)
\]
such that
\begin{align*}
\Pi V_j & = M_{z_j}\Pi \quad \mbox{for~} j=1, \ldots, k \\
\Pi V_{k+i} & = (I_{H^2(\D^k)} \otimes \tau_i)\Pi \quad \mbox{for~} i=1, \ldots, n-k 
\end{align*}
where $\clw = \cap_{j=1}^{k}Ker(V_j^{*})$ and $\tau_i$
is a pure contraction on $\clw$ with
\[
\tau_i \tau_j = \tau_j \tau_i  \mbox{~~and ~~} \tau_i^* \tau_j = \tau_j \tau_i^*
\]
for $1 \leq i < j \leq n-k$. Now from Theorem \ref{Theorem-dc-model}, we have  
$\tau^* = (\tau_1^*, \ldots, \tau_{n-k}^* )$ on $\clw$ is unitarily equivalent to
$(M^*_{w_1}|_{\clq}, \ldots, M^*_{w_{n-k}}|_{\clq})$, where $\clq$ is a
joint $(M_{w_1}^*, \ldots,  M_{w_{n-k}}^{*})$ invariant subspace of 
$H^2_{\cld_{\tau^*}}(\D^{n-k})$ and
$\cld_{\tau^*} = \overline{ran}(\prod_{i=1}^{n-k}(I_{\clw} -\tau_i\tau_i^*))^{1 \over 2}$.
Let 
$U: \clw \rightarrow \clq \subseteq H^2_{\cld_{\tau^{*}}}(\D^{n-k})$ be a 
unitary such that
\[
U \tau_i^* = M_{w_i}^*|_{\clq} U  \quad \mbox{for} ~i=1, \ldots, n-k.
\] 
Thus we have a unitary map $I_{H^2(\D^k)} \otimes U^* : H^2(\D^k) \otimes {\clq} \rightarrow H^2(\D^k) \otimes {\clw}$
such that
\[
(I_{H^2(\D^k)} \otimes U^*) (M_{z_j} \otimes I_{\clq})   = (M_{z_j} \otimes I_{\clw}) (I_{H^2(\D^k)} \otimes U^*) 
\]
and
\[
(I_{H^2(\D^k)} \otimes U^*) (I_{H^2(\D^k)} \otimes M_{w_i}^*|_{\clq})  =  
(I_{H^2(\D^k)} \otimes \tau_{i}^*)(I_{H^2(\D^k)} \otimes U^*) 
\]
for $j=1, \ldots, k$ and $i=1, \ldots, n-k$.

Let $\iota: \cls \rightarrow  H^2(\D^k) \otimes H^2(\D^{n-k})$ be the inclusion map.
Therefore we have an operator
\[
\tilde{\Pi} = \iota \circ \Pi^{*} \circ (I_{H^2(\D^k)} \otimes U^*): H^2(\D^k)\otimes {\clq} \rightarrow H^2(\D^k) \otimes H^2(\D^{n-k})
\]
which is an isometry as each one is so and also satisfying
\[
\tilde{\Pi}(M_{z_j} \otimes I_{\clq})  = (M_{z_j} \otimes I_{\clw}) \tilde{\Pi}  \quad \mbox{for~} j=1, \ldots, k
\]
and
\[
\tilde{\Pi}(I_{H^2(\D^k)} \otimes M_{w_i}^*|_{{\clq}})  =  (I_{H^2(\D^k)} \otimes M_{w_i}^*)\tilde{\Pi} \quad \mbox{for~} i=1, \ldots, n-k.
\]
Then from the first equality, we have $\tilde{\Pi}$ is an inner multiplier, say,
\[
\tilde{\Pi} = M_{\Phi},
\]
for some $\Phi \in H^{\infty}_{\clb(\clq, H^2(\D^{n-k}))}(\D^k)$. 
 Again the other equality gives
\[
\Phi(e^{it}) M_{w_j}^*|_{\clq} = M_{w_j}^* \Phi(e^{it}), \quad j=1, \ldots, n-k.
\]
for all $t \in \mathbb{R}^k$ and $e^{it} = e^{it_1} \cdots e^{it_k}$. Since $\Phi(e^{it})$ is isometry almost everywhere (a.e.)
with respect to the Lebesgue measure on the $k$-dimensional torus $\mathbb{T}^k$, from the above Lemma \ref{lemma-extension1}, 
\[
\Phi(e^{it}): \clq \rightarrow H^2(\D^{n-k}) \quad (e^{it} \in \mathbb{T}^k \mbox{~a.e.})
\] 
is the restriction of a constant isometric multiplier
from $H^2_{\tilde{\cle}}(\D^{n-k})$ to $H^2(\D^{n-k})$, where
$\tilde{\cle}$ is a one-dimensional space.
Since $\tilde{\cle}$ and $\mathbb{C}$
are one dimensional spaces, the space of all constant multipliers in
$H^{\infty}_{\clb(\tilde{\cle}, \mathbb{C})}(\D^{n-k})$
is also one dimensional. Hence there exists a unitary multiplier
$M_{{X}}: H_{\tilde{\cle}}^2(\D^{n-k})\rightarrow H^2(\D^{n-k})$
such that
\[
\Phi(e^{it})= \Theta(e^{it}) M_{X}|_{\clq} \quad (\mbox{~a.e. on~} \mathbb{T}^k )
\]  
for some scalar $\Theta(e^{it})$. Since $\Phi$ is analytic,
$\Theta$ is also analytic on $\D^{k}$.
Also $|\Theta(e^{it})|=1$ as $M_{X}|_{\clq}$ and $\Phi(e^{it})$ 
are isometries a.e. on $\mathbb{T}^k$.
Hence $\Theta \in H^{\infty}(\D^k)$ is an inner function.
Thus the inner multiplier $M_{\Phi}: H^2_{\clq}(\D^k) \rightarrow H^2_{H^2(\D^{n-k})}(\D^k)$ factors as 
$M_{\Theta } \otimes M_{X}|_{\clq}$ from $H^2(\D^k) \otimes {\clq}$ to $H^2(\D^k)\otimes {H^2}(\D^{n-k})$.
Therefore
\[
\cls = ran(M_{\Phi}) = M_{\Phi} H^2_{\clq} = \Theta H^2(\D^{k}) \otimes M_{X}(\clq).
\]
Since $M_X: H^2_{\tilde{\cle}}(\D^{n-k}) \rightarrow H^2(\D^{n-k})$ is a constant unitary,
it intertwines with $M_{w_i}^{*}$ for $i=1, \ldots, n-k$.
Hence $M_{X}(\clq)$ is a closed doubly commuting invariant subspace of
$ H^2(\D^{n-k})$. Therefore
from the result \cite{JAY-JORDAN},
$M_{X}(\clq)$ is of the form
\[
M_{X}(\clq) = \clq_{\theta_1} \otimes \cdots \otimes \clq_{\theta{n-k}},
\]
where $\clq_{\theta_j}$ is either a Jordan block or the Hardy space 
$H^2(\D)$, $j=1, \ldots, n-k$.
Thus
\[
\cls = \Theta H^2(\D^{k}) \otimes \clq_{\theta_1} \otimes \cdots \otimes \clq_{\theta{n-k}}.
\]
This finishes the proof.

\end{proof}

\section{Some Applications}

\noindent
In this section we give some concrete examples
of {\it{mixed invariant}} subspaces. 

Let $\mathbb T$ be the unit circle and $K(\cdot,  z)$ be the Szeg\" o kernel on $H^2(\D)$.
We consider the examples 
of {\it{mixed invariant}} subspaces in the Hardy space $H^2(\D^2)$ over the 
bidisc for the sake of simplicity.
We can identify $H^2(\D^2)$ as $H^2(\D) \otimes H^2(\D)$. 
Let $(\alpha_n) \subset \D$ be a sequence such that $\sum_{n=0}^{\infty} 1 - |\alpha_n|^2 < \infty$.
Consider for $N \in \mathbb{N}$,
\[
\clq_N = \mbox{span} \{ K(\cdot, {\alpha_1}), \ldots, K(\cdot, {\alpha_N})\}
\]
and 
\[
\cls_N = H^2(\D) \otimes \clq_N. 
\]
Then clearly $\cls_N$ is invariant under $(M_{z} \otimes I_{H^2(\D)}, I_{H^2(\D)} \otimes M_{z}^{*})$.
Also 
\[
(\cls_N \ominus z_1 \cls_N) = (\cls_N \ominus (M_{z} \otimes I_{H^2(\D)}) \cls_N)=   (H^2(\D) \ominus z H^2(\D)) \otimes \clq_N = \mathbb{C} \otimes \clq_N. 
\]
Thus $\cls_N$ is an example of a \it{mixed invariant} subspace with $\dim(\cls_N \ominus z_1 \cls_N) = N$ 
for each $N \in \mathbb{N}$. Moreover,
if we take 
\[
\clq =  \overline{\mbox{span}} \{ K(\cdot, {\alpha_i}): i \in \mathbb{N} \},
\]
then $\cls = H^2(\D) \otimes \clq$ is a \it{mixed invariant} subspace with 
$\dim(\cls \ominus z_1 \cls) = \infty$. Note that here $\clq$ is a proper 
subspace of $H^2(\D)$ as 
$\prod_{j=1}^{\infty}\frac{z - \alpha_j}{1 - \bar{\alpha_j}z} \in \clq^{\perp}= H^2(\D) \ominus \clq$.

In particular, we have an explicit representation of 
the Beurling-Lax-Halmos's inner function $\Theta$ and the {\it{mixed invariant}} subspace
$\cls$ of $H^2(\D^n)$, where $\cls \ominus z_1 \cls$ is one dimensional.

\begin{thm}\label{thm-3}
Let $\cls \subseteq H^2_{H_n}(\D)$ be
a closed subspace, and let $\clw = \cls \ominus z \cls$. Suppose $\cls$ is invariant 
under $(M_{z}, M_{\kappa_1}^*, \ldots, M_{\kappa_{n}}^* )$. 
Then $\dim(\clw) =1$ if and only if
\[
\cls =  \Theta H^2(\D),
\]
where $\Theta \in H^\infty_{\clb( \mathbb C, H_n) }(\D)$ is an inner multiplier given by
\[
 \Theta(z) = \psi(z) K(\cdot, ( \overline{\phi_1(z)}, \ldots, \overline{\phi_n(z)} ))  \qquad (z \in \D),
\]
$\phi_j, \psi \in H^{\infty}(\D)$ and $|\phi_j(z)|< 1$ for all $z \in \D$ and 
\[
{|\psi(e^{i\theta})|^2} = {\prod_{j=1}^n(1- |\phi_j(e^{i\theta})|^2)}  \mbox{~~a.e. on~~} \mathbb T.
\]
\end{thm}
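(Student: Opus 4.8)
The plan is to reduce the problem to the commutant structure established in Theorem 3.3 and then extract the scalar inner/outer factors by a rank-one analysis. The setting $\cls \subseteq H^2_{H_n}(\D)$ with $\cls$ invariant under the single shift $M_z$ together with the adjoints $M_{\kappa_1}^*, \ldots, M_{\kappa_n}^*$ is exactly a \emph{mixed invariant} subspace of the type studied in Corollary 3.4. So first I would observe that since $\cls$ is $M_z$-invariant, by Beurling--Lax--Halmos $\cls = M_\Theta H^2_{\clw}(\D)$ for an inner multiplier $\Theta \in H^\infty_{\clb(\clw, H_n)}(\D)$, where $\clw = \cls \ominus z\cls$. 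The hypothesis $\dim(\clw) = 1$ is what makes $\Theta$ essentially scalar-valued: the wandering subspace is one-dimensional, so $\clw = \C\, \xi$ for a single generator $\xi \in H_n = H^2(\D^n)$, and $\Theta(z)$ acts from $\C$ into $H_n$, i.e. $\Theta(z)$ is a vector $\Theta(z)\cdot 1 \in H^2(\D^n)$ for each $z$.

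Next I would exploit the commutation with $M_{\kappa_i}^*$. By the representation in Theorem 3.3 (equivalently Corollary 3.4), the generator $\xi$ of the wandering subspace and the function $\Theta$ must be compatible with the coordinatewise co-analytic structure in the $\bm w = (w_1, \ldots, w_n)$ variables. The key computation is to write $\xi = \Theta(z)\cdot 1$ as an element of $H^2(\D^n)$ and to use that $\cls$ being $M_{\kappa_i}^*$-invariant forces each $w_i$-slice of $\xi$ to lie in a common joint kernel structure. Concretely, I expect $\xi$ to be a (scalar multiple of a) reproducing kernel: the condition $M_{\kappa_i}^* \cls \subseteq \cls$ with $\dim \clw = 1$ should pin down $\xi(\bm w) = \prod_{j=1}^n (1 - \bar\beta_j w_j)^{-1}$ up to a scalar, i.e. a product Szeg\"o kernel $K(\cdot, \beta_1) \otimes \cdots \otimes K(\cdot, \beta_n)$, where the points $\beta_j = \phi_j(z)$ are allowed to depend holomorphically on $z$. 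This is where the functions $\phi_j$ enter: each $\phi_j \in H^\infty(\D)$ with $|\phi_j(z)| < 1$ records the ``eigenvalue'' of $M_{\kappa_j}^*$ along the one-dimensional wandering fiber. Thus the natural candidate is
\[
\cls = \psi(z)\, K\bigl(\cdot, {\textstyle\prod_{j=1}^n \overline{\phi_j(z)}}\bigr) H^2(\D),
\]
with $\psi$ the scalar outer factor carrying the norm.

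The normalization condition on $\psi$ should come from the isometry requirement on $\Theta$. Since $\Theta$ is inner, $\Theta(e^{i\theta})$ is an isometry from $\C$ into $H^2(\D^n)$ for a.e. $\theta$, which means $\|\Theta(e^{i\theta})\cdot 1\|^2_{H^2(\D^n)} = 1$ a.e. Computing the norm of the product reproducing kernel gives $\|K(\cdot, \beta_1)\otimes \cdots \otimes K(\cdot, \beta_n)\|^2 = \prod_{j=1}^n (1 - |\beta_j|^2)^{-1}$, so the scalar factor $\psi$ must satisfy
\[
|\psi(e^{i\theta})|^2 = \prod_{j=1}^n \bigl(1 - |\phi_j(e^{i\theta})|^2\bigr) \quad \text{a.e. on } \mathbb{T},
\]
which is precisely the stated relation. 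The converse direction is a direct verification: given $\psi, \phi_1, \ldots, \phi_n$ with these properties, one checks that the displayed $\cls$ is $M_z$-invariant (it is $\psi K(\cdot, \cdots) H^2$, manifestly a cyclic $M_z$-invariant subspace), that its wandering subspace is one-dimensional (generated by the single kernel section), and that the $M_{\kappa_i}^*$-invariance holds because $M_{z_{n+i}}^*$ acts on the product Szeg\"o kernel by scaling, $M_{z_{n+i}}^* K(\cdot, \beta_i) = \bar\beta_i K(\cdot, \beta_i)$. The main obstacle I anticipate is the forward step of showing the wandering generator $\xi$ must be \emph{exactly} a product reproducing kernel rather than some more general vector annihilated in a weaker sense; this requires carefully using that $\dim\clw = 1$ forces the simultaneous $M_{\kappa_i}^*$-action on the fiber to be scalar (a joint eigenvector condition), and then invoking that a nonzero joint eigenvector of all the $M_{z_{n+i}}^*$ in $H^2(\D^n)$ is necessarily a product Szeg\"o kernel.
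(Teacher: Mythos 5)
Your proposal is correct and follows essentially the same route as the paper's proof: Beurling--Lax--Halmos with the one-dimensional wandering space to write $\cls = \Theta H^2(\D)$, the observation that $M_{\kappa_j}^*$-invariance plus commutation with the shift makes $\Theta(z)$ a joint eigenvector of the backward shifts $M_{w_j}^*$ with scalar eigenvalues $\phi_j(z)$, the identification of such joint eigenvectors with product Szeg\"{o} kernels, and the inner-multiplier normalization forcing $|\psi(e^{i\theta})|^2 = \prod_{j=1}^n (1-|\phi_j(e^{i\theta})|^2)$ a.e. The step you flag as the ``main obstacle'' is exactly what the paper settles via the intertwining $M_{\kappa_j}^* M_{\Theta} = M_{\Theta} M_{\phi_j}$ (quoting Theorem 3.2 of the authors' earlier paper \cite{AASS}), so your outline matches the published argument.
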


\begin{proof}
Suppose that $\cls$ is a closed subspace of $H^2_{H_n}(\D)$ which 
is invariant under $(M_{z}, M_{\kappa_1}^*, \ldots, M_{\kappa_{n}}^* )$.
Consider 
\begin{align*}
V & = M_z|_{\cls} ~~~ \quad \mbox{and} \\ 
V_j & = P_{\cls}M_{\kappa_j}|_{\cls} \mbox{~~for~}j =1, \ldots, n. 
\end{align*}
Since $V$ is a shift and $V_j^*$ commute with $V$, the tuple
$(V, V_1^{*}, \ldots, V_n^*)$ on $\cls$ is unitarily equivalent to $(M_z, M_{\phi_1}, \ldots, M_{\phi_n})$
on $H^2_{\clw}(\D)$, where $\clw = \cls \ominus z\cls$ and from Remark \ref{Remark-commutant-1},
\[
\phi_j(w) = P_{\clw}(I_\cls - w V^{*})^{-1}V_j^{*}|_{\clw} \quad (w \in \D, ~j =1, \ldots, n).
\] 
Since $\cls$ is joint invariant under $(M_{z}, M_{\kappa_1}^*, \ldots, M_{\kappa_{n}}^* )$
and $\dim(\clw) =1$, using the result (\cite{AASS}, Theorem 3.2 ), 
we have an isometry
\[
M_{\Theta}: H^2 (\D) \rightarrow H^2_{H_n}(\D) \mbox{~~such that~~} \cls = \Theta H^2{(\D)},
\]
where $\Theta \in H^\infty_{\clb( \mathbb C, H_n) }(\D)$ is an inner multiplier
and $M^{*}_{\kappa_j}M_{\Theta} = M_{\Theta} M_{\phi_j}$ for some $\phi_j \in H^{\infty}(\D)$
for $1\leq j \leq n$.
Therefore, for each $z \in \D$
\[
M^{*}_{z_j}\Theta(z) = \Theta(z)\phi_j(z)= \phi_j(z)\Theta(z)
\]
for $1\leq j \leq n$.
This implies $\Theta(z) \cdot 1 = \Theta(z) \in H^2(\D^n)$ is an eigenvector for the operator 
$M_{z_j}^{*}$
with eigenvalue $\phi_j(z)$. Hence 
$|\phi_j(z)|< 1$ for $z \in \D$, $1\leq j \leq n$.
That means for each $z \in \D$
\[
\Theta(z) \in \ker( M_{z_j} - \overline{\phi_j(z)} I_{H^2(\D^n)})^{*}  \quad \mbox{for all~} 1\leq j \leq n.
\]
This implies
$ \Theta(z) \in \mbox{span} \displaystyle\{K(\cdot, ( \overline{\phi_1(z)}, \ldots, \overline{\phi_n(z)} )\}$,
where
\[
K(\cdot, ( \overline{\phi_1(z)}, \ldots, \overline{\phi_n(z)} )(\bm{w}) =  \prod_{j=1}^n \frac{1}{1- \phi_j(z)w_j} \qquad (\bm{w} \in \D^n).
\]
Therefore
\[
\Theta(z) = \psi(z) K(\cdot, ( \overline{\phi_1(z)}, \ldots, \overline{\phi_n(z)} ) \mbox{~~for some~~} \psi.
\]
Using the fact that
\[
\| K(\cdot, ( \overline{\phi_1(z)}, \ldots, \overline{\phi_n(z)} ) \|^2 =  {1 \over { \prod_{j=1}^n(1- |\phi_j(z)|^2)}},
\]
we have from the above identity
\[
|\psi(z)|^2 = \|\Theta(z)\|^2 { \prod_{j=1}^n(1- |\phi_j(z)|^2)}.
\]
It is easy to see that $\psi \in H^{\infty}(\D)$ as 
$|\psi(z)| \leq \|\Theta(z) \| \leq 1$ for $z \in \D$.
Since $\Theta \in H^\infty_{\clb( \mathbb C, H_n )}(\D)$ is an inner multiplier, by taking the 
radial limit, we have 
\[
{|\psi(e^{i\theta})|^2} = {\prod_{j=1}^n(1- |\phi_j(e^{i\theta})|^2)}  \mbox{~~a.e. on~~} \mathbb T.
\]
 Also
\[
\cls = \Theta H^2(\D) 
\]
where
\[
 \Theta(z) = \psi(z) K(\cdot, ( \overline{\phi_1(z)}, \ldots, \overline{\phi_n(z)} ))  \qquad (z \in \D).
\]
The converse follows easily.

This completes the proof.
\end{proof}

The above result is a generalization of the result on the Hardy space $H^2(\D^2)$ over the bidisc  
by Izuchi et al.(see Theorem 3.2 ,\cite{IIN}).

\vspace{0.3in}

\NI\textit{Acknowledgement:} 
The authors are grateful to the anonymous reviewer for his/her critical 
and constructive reviews and suggestions that have
substantially improved the presentation of the paper.
The authors are also thankful to Prof. Jaydeb Sarkar for many fruitful 
discussions and his valuable comments. 
The first author's research work is
supported by Faculty Initiation Grant (FIG scheme), 
IIT Roorkee (Ref. No: MAT/FIG/100820) and he also 
acknowledges Indian Statistical Institute,
Bangalore Centre for warm hospitality.

\end{document}